\newcommand{\N}{\mathbb{N}}                     % the natural numbers
\newcommand{\Z}{\mathbb{Z}}                     % the integer numbers
\newcommand{\R}{\mathbb{R}}                     % the real line
\newcommand{\C}{\mathbb{C}}                     % the complex plane
\newcommand{\T}{\mathbb{T}}                     % the torus
\newcommand{\set}[2]{\left\{{#1}\mid{#2}\right\}}       % the set
\newcommand{\im}{\mathrm{Im\,}}                 % immaginary part
\newcommand{\re}{\mathrm{Re\,}}                 % real part
\newcommand{\coker}{\mathrm{coker\,}}           % Cokernel
\newtheorem{thm}{\bf Theorem}[section]      % numbered within each section
\newtheorem*{thm*}{\bf Theorem}			% unnumbered Theorem
\newtheorem{lem}[thm]{\bf Lemma}            % numbered along with Theorem
\newtheorem{prop}[thm]{\bf Proposition}     % numbered along with Theorem
\title{Corrigendum: ``On the Floer homology of cotangent bundles''}
\author{Alberto Abbondandolo and Matthias Schwarz}
\date{}
\begin{document}    

\maketitle   

\begin{abstract}
We fix an orientation issue which appears in our previous paper about the isomorphism between Floer homology of cotangent bundles and loop space homology. When the second Stiefel-Whitney class of the underlying manifold does not vanish on 2-tori, this isomorphism requires the use of a twisted version of the Floer complex.
\end{abstract}

\section*{Introduction}

The aim of this ``Corrigendum'' is to fix an issue with orientations which appears in our paper \cite{as06}. In this paper, we consider a fiber-wise time-periodic uniformly convex and asymptotically quadratic Lagrangian function $L\in C^{\infty}( \T \times TM)$ on the tangent bundle of a closed oriented manifold $M$ and its Legendre-dual Hamiltonian $H\in C^{\infty}( \T \times T^*M)$. Then we construct an isomorphism from the Morse complex of the Lagrangian action functional, which is given by $L$, to the Floer complex, which is associated to $H$ and to an almost complex structure which is compatible with the standard symplectic form $\omega$ of $T^*M$. We deal with both periodic and Dirichlet boundary conditions. 
In the periodic case, the existence of this isomorphism implies that the homology of the Floer complex, or equivalently the symplectic homology of the unit cotangent disc bundle $D^* M$, is isomorphic to the singular homology of $\Lambda(M)$, the free loop space of $M$. This result had been previously proved by different methods by C.\ Viterbo \cite{vit03} and by D.\ Salamon and J.\ Weber \cite{sw06}. In the Dirichlet case, the corresponding Floer homology is isomorphic to the singular homology of the based loop space of $M$.

However, a recent work of T.\ Kragh \cite{kra07} has highlighted some sign discrepancies between symplectic homology of cotangent disc bundles, defined by standard conventions regarding signs, and loop space homology (see also \cite{kra11}). P.\ Seidel has confirmed the existence of these discrepancies in the informal note \cite{sei10}, where he shows that, again using standard conventions, the symplectic homology of the cotangent disc bundle of $\C\mathbb{P}^2$ over a field of characteristic different from two vanishes, so in particular it is not isomorphic to the singular homology of the loop space of $\C\mathbb{P}^2$ over the same field. M.\ Abouzaid has clarified this issue, by showing that when the second Stiefel-Whitney class of $M$ does not vanish on 2-tori, the isomorphism between 
symplectic homology of $D^*M$ and singular homology of the free loop space of $M$ requires either to use a non-trivial system of local coefficients for the singular homology of $\Lambda(M)$, or to change the definition of symplectic homology by a twist (see \cite[Section 3]{abo11}).

This means that the isomorphism $\Theta$ that we construct in our Theorem 3.1 in \cite{as06} fails to be a chain map in the periodic case, when the second Stiefel-Whitney class of $M$ does not vanish on 2-tori. After a careful inspection of our proofs, we can now clarify the source of the mistake in \cite{as06}: in order to orient the spaces of half-cylinders 
\[
u : [0,+\infty[\times \T \longrightarrow T^*M
\]
on which the definition of $\Theta$ is based, we use the fact that the orientation of the determinant line which is associated to the linearization of the Floer equation along $u$ does not depend on the choice of the vertical-preserving unitary trivialization of $u^*(TT^*M)$ coinciding with a given one at $\{+\infty\} \times \T$. Such a trivialization is used in order to produce a Fredholm operator between fixed Banach spaces. However, in order to prove that $\Theta$ is a chain map, one would need the same fact for a more general class of trivializations: the hypothesis that the trivialization should be vertical-preserving should be required only at $\{0\} \times \T$. This is due to the fact that if $v: \R \times \T \rightarrow T^*M$ is a Floer cylinder which joins two given periodic orbits, along which two vertical-preserving unitary trivializations of $TT^*M$ have been fixed, it is always possible to find a unitary trivialization of $v^*(TT^*M)$ which agrees with the given ones at $\{-\infty\}\times \T$ and $\{+\infty\}\times \T$, but in general we cannot require this trivialization to be vertical-preserving. The latter fact had been correctly noticed in \cite{as06}, but we had overlooked its consequence, namely that this requires to use the above more general class of trivialization also for half-cylinders. However,
the invariance of the orientation of the determinant line with respect to this more general change of trivialization is simply not true, as we show in Proposition \ref{complex} below. 

Using this proposition, we then explain how our Theorem 3.1 can be corrected by replacing the standard Floer complex by a twisted version of it, exactly as M.\ Abouzaid does in \cite{abo11}, but using the different approach for dealing with orientations that we use in our original paper \cite{as06} (which is essentially an adaptation of A.\ Floer and H.\ Hofer's approach to the particular case of cotangent bundles, see \cite{fh93}). 

The part of Theorem 3.1 which concerns Dirichlet boundary conditions needs no correction.   

\paragraph{Acknowledgments.} We wish to thank P.\ Seidel for sharing with us his unpublished computations \cite{sei10} on the symplectic homology of $D^* \C \mathbb{P}^2$. We are particularly grateful to M.\ Abouzaid for patiently discussing with us  the issue which we fix in this ``Corrigendum''. The present work is part of the authors' activities within CAST, a Research Network Program of the European Science Foundation.

\section{The effect of certain unitary conjugacies on a class of Cauchy-Riemann operators on half-cylinders}
\label{uno}

Let $\R^{2n}=\R^n \times \R^n$ be endowed with the standard symplectic structure
\[
\omega_0 = dp \wedge dq, \quad (q,p)\in \R^n \times \R^n,
\]
and with the standard complex structure
\[
J_0 = \left( \begin{array}{cc} 0 & I \\ - I & 0 \end{array} \right).
\]
The symbol $\lambda_0$ denotes the vertical Lagrangian subspace $(0) \times \R^n$, and 
\[
W^{1,p}_{\lambda_0} ( ]0,+\infty[ \times \T, \R^{2n})
\]
denotes the space of maps $u:  ]0,+\infty[ \times \T \rightarrow \R^{2n}$ of Sobolev class $W^{1,p}$ whose trace on the boundary $\{0\} \times \T$ is $\lambda_0$-valued.

Let $\mathfrak{gl}(2n)$ be the vector space of linear endomorphisms of $\R^{2n}$. The symbol $\Sigma^+$ denotes the space of linear operators 
\[
D_S^+ : W^{1,p}_{\lambda_0} ( ]0,+\infty[ \times \T, \R^{2n}) \rightarrow L^p ( ]0,+\infty[ \times \T, \R^{2n})
\]
of the form
\[
u \mapsto \partial_s u - J_0 \partial_t u - S(s,t) u,
\]
where 
\[
S\in C^0( [0,+\infty] \times \T, \mathfrak{gl}(2n))
\]
is such that $S(+\infty,t)$ is symmetric for every $t\in \T$, and the loop $S(+\infty,\cdot)$ is non-degenerate, meaning that $\gamma(1)$ does not have the eigenvalue 1, where $\gamma$ is
the path of symplectic matrices which solves the linear Cauchy problem
\[
\frac{d}{dt} \gamma(t) = J_0 S(+\infty,t) \gamma(t), \qquad \gamma(0)=I.
\]
If $2<p<\infty$, $D_S^+$ is a Fredholm operator of index $-\mu_{CZ}(\gamma)$, minus the Conley-Zehnder index of the symplectic path $\gamma$ (see \cite[Theorem 3.4]{as06}).

Let $\mathrm{Sym}(2n)$ be the vector space of symmetric linear endomorphisms of $\R^{2n}$.
If $S^+ \in C^0(\T, \mathrm{Sym}(2n))$ is a non-degenerate loop, the symbol $\Sigma^+(S^+)$ denotes the subspace of all $D_S\in \Sigma^+$ with given asymptotics  $S(+\infty,\cdot) = S^+$. Since $\Sigma^+(S^+)$ is contractible (being star-shaped), the restriction of the determinant bundle to it, that we denote by $\det(\Sigma^+(S^+))$, is trivial, hence orientable.

We recall that the determinant line of a Fredholm operator $A$ is the one-dimensio\-nal real vector space
\[
\det(A) := \Lambda^{\max}( \ker A) \otimes \Lambda^{\max} \bigl((\coker A)^*\bigr),
\]
where $\Lambda^{\max}(V)$ denotes the top exterior power of the finite dimensional real vector space $V$. The collection of the determinant lines of all Fredholm operators from a Banach space $E$ to a Banach space $F$ has the structure of a smooth real line bundle over the space $\mathrm{Fred}(E,F)$ of Fredholm operators, which is known as the determinant bundle and is denoted by $\det(E,F)$. Two isomorphisms $\Phi: E \cong E'$ and $\Psi: F \cong F'$ induce a canonical smooth line bundle isomorphism $\det(E,F) \rightarrow \det(E',F')$ lifting the diffeomorphism
\[
\mathrm{Fred}(E,F) \rightarrow \mathrm{Fred}(E',F'), \qquad A \mapsto \Psi A \Phi^{-1}.
\]

Let us identify $\R^{2n}$ with $\C^n$ by the mapping $(q,p)\mapsto q+ip$.
Let $\mathrm{U}(n)$ be the unitary group of $\C^n$. The subgroup of $\mathrm{U}(n)$ of automorphisms which preserve the vertical subspace $\lambda_0=i\R^n$ is precisely $\mathrm{O}(n)$, the orthogonal group of $\R^n$, whose elements are extended to $\C^n$ by complex linearity. If we also impose that the restriction to $\lambda_0$ should be orientation-preserving, we obtain the subgroup $\mathrm{SO}(n)$.

We wish to study the behavior of the determinant line of the operator $D_S^+$ by conjugation by an $(s,t)$-dependent unitary transformation. We recall that the extended real line $[-\infty,+\infty]$ is given the differentiable structure which is induced by the homeomorphism $[-\infty,+\infty] \cong [-\pi/2,\pi/2]$ which extends the function $s\rightarrow \arctan s$. If $U\in C^{\infty}([0,+\infty] \times \T, \mathrm{U}(n))$ then 
\begin{equation}
\label{ide}
U (\partial_s - J_0 \partial_t - S) U^{-1} = \partial_s - J_0 \partial_t  - (\partial_s U) U^{-1} + J_0 (\partial_t U) U^{-1} - U S U^{-1}.
\end{equation}
In particular, if $U(0,t)\in \mathrm{O}(n)$ and $U(+\infty,t)=I$ for every $t\in \T$, then the left multiplication by $U^{-1}$ is an automorphism of $W^{1,p}_{\lambda_0} ( ]0,+\infty[ \times \T, \R^{2n})$, the left multiplication by $U$ is an automorphism of $L^p ( ]0,+\infty[ \times \T, \R^{2n})$,
and the operators $D_S$ and $U D_S U^{-1}$ belong to the same space $\Sigma^+(S^+)$.

At the end of Section 3.2 in \cite{as06}, we deal with ``the analogue of Lemma 13 in \cite{fh93}''. In the context of that section, this refers to the following proposition:

\begin{prop}
\label{easy}
Let $S^+ \in C^0(\T, \mathrm{Sym}(2n))$ be a non-degenerate loop.
Let $U\in C^{\infty}([0,+\infty] \times \T, \mathrm{SO}(n))$ be such that $U(+\infty,t)=I$ for every $t\in \T$. Then the canonical lift to the determinant bundle of the map
\[
\Sigma^+(S^+) \rightarrow \Sigma^+(S^+), \qquad D_S^+ \mapsto U D_S^+ U^{-1},
\]
is orientation preserving.
\end{prop}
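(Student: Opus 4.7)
The plan is to deform $U$ to the identity through maps still satisfying the hypotheses, so that the conjugation $D_S^+ \mapsto U D_S^+ U^{-1}$ becomes connected to the identity self-map of $\Sigma^+(S^+)$, and then to invoke continuity of the canonical lift on the determinant bundle. Let $\mathcal{U}$ denote the space of smooth maps $V : [0,+\infty] \times \T \to \mathrm{SO}(n)$ with $V(+\infty,\cdot) \equiv I$. Given any continuous path $\tau \mapsto U_\tau$ in $\mathcal{U}$ with $U_0 = I$ and $U_1 = U$, formula (\ref{ide}) shows that $\tau \mapsto U_\tau D_S^+ U_\tau^{-1}$ is a continuous path of operators in $\Sigma^+(S^+)$: the derivative terms $(\partial_s U_\tau) U_\tau^{-1}$ and $(\partial_t U_\tau) U_\tau^{-1}$ vanish at $s = +\infty$ because $U_\tau(+\infty,\cdot) \equiv I$, so the asymptotic symbol is $S^+$ throughout the homotopy. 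The canonical lift then produces a continuous family of isomorphisms of the line bundle $\det(\Sigma^+(S^+))$ covering these conjugation maps, equal to the identity at $\tau = 0$; since ``orientation-preserving'' is a discrete invariant along such a homotopy, orientation preservation at $\tau = 1$ follows automatically.

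The only concrete step is to produce the path $U_\tau$. Fixing a diffeomorphism $\phi : [0,1] \to [0,+\infty]$ with $\phi(0) = 0$, compatible at $+\infty$ with the extended smooth structure, I would set
\[
U_\tau(s,t) := U\bigl(\phi((1-\tau) + \tau\, \phi^{-1}(s)),\, t\bigr), \qquad (\tau,s,t) \in [0,1] \times [0,+\infty] \times \T.
\]
Then $U_0 \equiv U(+\infty,\cdot) = I$ and $U_1 = U$; each $U_\tau$ takes values in $\mathrm{SO}(n)$ and satisfies $U_\tau(+\infty,t) = U(+\infty,t) = I$; smoothness in $(s,t)$ and joint continuity in $(\tau,s,t)$ are immediate from the smoothness of $U$. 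Hence $U_\tau \in \mathcal{U}$ for every $\tau$, and the previous paragraph applies.

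I do not expect a serious obstacle here; the argument is essentially contractibility of a path space. The crucial use of the hypothesis is that $U$ takes values in the \emph{connected} group $\mathrm{SO}(n)$, rather than in the two-component $\mathrm{O}(n)$: this is exactly what permits the retraction of $U$ to $I$ while remaining in $\mathcal{U}$. The contrasting Proposition \ref{complex} will isolate the more delicate situation in which $U$ is only $\mathrm{O}(n)$-valued on the boundary $\{0\} \times \T$ and unitary in the interior, where no such retraction exists and the canonical lift can genuinely reverse the orientation of $\det(\Sigma^+(S^+))$.
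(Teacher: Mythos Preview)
Your argument is correct and is exactly the ``simple homotopy argument'' the paper alludes to: since $U$ is $\mathrm{SO}(n)$-valued on all of $[0,+\infty]\times\T$ and equals $I$ at $+\infty$, pushing the $s$-variable toward $+\infty$ retracts $U$ to $I$ through maps in the same class, and continuity of the canonical lift finishes the job. One small correction to your closing commentary: in Proposition~\ref{complex} the boundary value $U(0,\cdot)$ is required to lie in $\mathrm{SO}(n)$, not merely $\mathrm{O}(n)$; the obstruction there comes from $U$ being only $\mathrm{U}(n)$-valued in the interior, so your retraction (which keeps values in $\mathrm{SO}(n)$ only because $U$ does everywhere) is no longer available.
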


This Proposition is indeed correct and easy to prove, by a simple homotopy argument. However, in order to prove that the isomorphism $\Theta$ of Theorem 3.1 in \cite{as06} is a chain map, one would need an analogous statement for a more general class of conjugacies $U$: $U$ should be in 
$C^{\infty}([0,+\infty] \times \T, \mathrm{U}(n))$ and such that $U(+\infty,t)=I$, $U(0,t)\in \mathrm{SO}(n)$ for every $t\in \T$ (the reason for this is explained in Section \ref{tre} below). But the canonical lift to the determinant bundle of the map $D_S^+ \mapsto U D_S^+ U^{-1}$ need not be orientation-preserving when $U$ is in the latter more general class. Indeed, the correct generalization of Proposition \ref{easy} to the above class of conjugacies is:

\begin{prop}
\label{complex}
Let $S^+ \in C^0(\T, \mathrm{Sym}(2n))$ be a non-degenerate loop. Let $U\in C^{\infty}([0,+\infty] \times \T, \mathrm{U}(n))$ be such that $U(+\infty,t)=I$ and $U(0,t)\in \mathrm{SO}(n)$ for every $t\in \T$. Then the canonical lift to the determinant bundle of the map
\begin{equation}
\label{mappa}
\Sigma^+(S^+) \rightarrow \Sigma^+(S^+), \qquad D_S^+ \mapsto U D_S^+ U^{-1},
\end{equation}
is orientation preserving if and only if either
\begin{enumerate}
\item $n=1$, or
\item $n=2$ and the homotopy class of the loop $U(0,\cdot)$ in $\mathrm{SO}(2)$ is an even multiple of the generator of $\pi_1(\mathrm{SO}(2))=\Z$, or
\item $n\geq 3$ and the loop $U(0,\cdot)$ is contractible in $\mathrm{SO}(n)$.
\end{enumerate}
\end{prop}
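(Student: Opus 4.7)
Denote by $\mathcal{U}$ the space of $U\in C^{\infty}([0,+\infty]\times\T,\mathrm{U}(n))$ satisfying the boundary conditions of the statement, and let $\epsilon(U)\in\{\pm 1\}$ be the sign by which the canonical lift to $\det(\Sigma^+(S^+))$ of the conjugation \eqref{mappa} acts on orientations. Since $\Sigma^+(S^+)$ is contractible and the lift varies continuously with $U$, $\epsilon$ is locally constant on $\mathcal{U}$. The evaluation $U\mapsto U(0,\cdot)$ sends $\mathcal{U}$ surjectively onto $\Omega\mathrm{SO}(n)$---the map $\pi_1(\mathrm{SO}(n))\to\pi_1(\mathrm{U}(n))$ vanishes in all three cases, so every loop admits a null-homotopy in $\mathrm{U}(n)$---and its fibres are connected, since the ``difference'' $U_1U_2^{-1}$ of two null-homotopies of the same loop defines a based map $S^2\to\mathrm{U}(n)$ and $\pi_2(\mathrm{U}(n))=0$. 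Hence $\pi_0(\mathcal{U})\cong\pi_1(\mathrm{SO}(n))$. Finally $\mathcal{U}$ is a topological group under pointwise multiplication and conjugation is functorial, so $\epsilon$ descends to a group homomorphism $\pi_1(\mathrm{SO}(n))\to\Z/2$.

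\paragraph{The easy half.} If $U(0,\cdot)$ is null-homotopic in $\mathrm{SO}(n)$, any such null-homotopy yields $V\in\mathcal{U}$ with $V(s,t)\in\mathrm{SO}(n)$ for all $(s,t)$; by fibre-connectedness $V$ lies in the same component of $\mathcal{U}$ as $U$, so Proposition \ref{easy} gives $\epsilon(U)=\epsilon(V)=+1$. This handles case (1) in full and the ``contractible'' subcase of (3), and, combined with the homomorphism property, also the even-multiples subcase of (2) as soon as the generator is shown to carry $\epsilon=-1$.

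\paragraph{The hard half: the generator has $\epsilon=-1$ for $n\geq 2$.} Represent the generator by $\gamma_0(t)=R_{12}(2\pi t)\oplus I_{n-2}\in\mathrm{SO}(n)$, rotation by $2\pi t$ in the first two real coordinates. A null-homotopy $U\in\mathcal{U}$ of $\gamma_0$ can be chosen to act only on the first two complex coordinates, and choosing $S$ block-diagonal splits $D_S^+$ as a direct sum on which $U$ acts trivially on the second factor; the determinant line factors, so the sign reduces to the case $n=2$ and the loop $R(2\pi t)\in\mathrm{SO}(2)$. For this I would pick an explicit constant operator, say $S(s,t)=\mu I$ with $\mu\in(0,2\pi)$, so that the kernel and cokernel of $D_S^+$ can be computed via Fourier separation in $t$, and construct an explicit null-homotopy of $R(2\pi t)$ by first conjugating with the fixed unitary that diagonalises $R(\theta)$ as $\mathrm{diag}(e^{i\theta},e^{-i\theta})$ and then contracting the resulting equator of $\mathrm{SU}(2)\cong S^3$ to the identity along a great disc. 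The sign $\epsilon(\gamma_0)$ is then extracted by a spectral-flow argument along the one-parameter family of conjugations interpolating $I$ with $U$.

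\paragraph{Main obstacle.} The subtle step is this final spectral-flow computation: one must arrange the interpolating family of Fredholm operators concretely enough to see that the parity of its eigenvalue crossings is odd, yielding $\epsilon(\gamma_0)=-1$. Granted that, the preceding steps combined with the homomorphism property determine $\epsilon$ on all of $\pi_1(\mathrm{SO}(n))$, matching the trichotomy (1)--(3) of the proposition.
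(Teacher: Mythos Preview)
Your overall architecture matches the paper's: reduce to a homomorphism $\pi_1(\mathrm{SO}(n))\to\Z/2$, dispose of the contractible case, compute the sign on a single generator for $n=2$, and stabilise. Your treatment of the contractible case is in fact a little cleaner than the paper's Lemma~\ref{contr}: you homotope $U$ within $\mathcal{U}$ to an entirely $\mathrm{SO}(n)$-valued map and quote Proposition~\ref{easy}, whereas the paper homotopes to a map depending only on $s$ and then does a small explicit computation.

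The genuine gap is the hard half, which you rightly flag, and here your proposed tool is not the right one. The operators $D_S^+$ are not self-adjoint, so spectral flow in the sense of counting real eigenvalue crossings through zero is unavailable; moreover the interpolating path the paper actually uses consists entirely of \emph{surjective} operators with two-dimensional kernel, so there are no crossings of any kind to count. What is required---and what you gesture at just before invoking spectral flow---is to follow a continuous basis of the kernel along a concrete path in $\Sigma^+(S^+)$ from $D_S^+$ to $UD_S^+U^{-1}$ and compare at the endpoint with the conjugated basis. The paper takes $S^+=-\pi I$, builds an explicit $W\in\mathcal{U}$ whose boundary loop is the standard rotation, and uses the shifts $W_r(s,t)=W(r+s,t)$ to produce a path $r\mapsto D_{T_r}^+$ in $\Sigma^+(-\pi I)$. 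Lemmas~\ref{kernel1} and~\ref{kernel2} solve the Lagrangian boundary condition Fourier-mode-by-mode to exhibit a continuous basis $u_r,v_r$ of $\ker D_{T_r}^+$; the payoff (Lemma~\ref{particolare}) is $Wu_1=-u_0$, $Wv_1=v_0$, whence $(Wu_1)\wedge(Wv_1)=-u_0\wedge v_0$ and conjugation reverses orientation. This explicit kernel computation is the substantive work you are missing, and it is not short.

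Two smaller points. Your choice $S=\mu I$ with $\mu\in(0,2\pi)$ gives an \emph{injective} operator (the decaying modes $c_k e^{(\mu-2\pi k)s}e^{2\pi ikt}$ with $k\geq 1$ are all killed by the $\lambda_0$-boundary condition), so you would be tracking cokernels rather than kernels; the paper's $-\pi I$ is the more convenient sign. And the passage from $S^+=-\pi I$ to a general non-degenerate $S^+$, which you do not mention, is done by gluing with a cylinder operator.
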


Since $\mathrm{SO}(1)=\{1\}$, $\pi_1(\mathrm{SO}(2))=\Z$ and $\pi_1\mathrm({SO}(n))=\Z_2$ for every $n\geq 3$, conditions (i), (ii) and (iii) can be restated in a unified way by saying that the loop $U(0,\cdot)$ in $\mathrm{SO}(n)$ lifts to a loop in the two-fold covering $\mathrm{Spin}(n)$ of $\mathrm{SO}(n)$.
The proof of this proposition is rather long and is presented in Section \ref{propro} below. It is based on homotopy arguments, together with some explicit computations which are similar to those appearing in \cite{des98} and in the proof of \cite[Proposition 8.1.7]{fooo09}.

\section{The twisted Floer complex}

In this section we recall how signs are determined in the definition of the standard Floer complex for periodic Hamiltonian orbits on the cotangent bundle of a closed manifold, and we explain how this definition can be modified in order to obtain a twisted Floer complex. The latter definition agrees with the one given by M.\ Abouzaid in \cite[Secton 3]{abo11}, which however is presented using a different and more intrinsic approach. Here we prefer to rephrase everything using the approach and the notation of \cite{as06}.
 
If $S^-,S^+\in C^0(\T,\mathrm{Sym}(2n))$ are two non-degenerate loops, $\Sigma(S^-,S^+)$ denotes the space of all operators of the form
\[
D_S : W^{1,p}(\R\times \T,\R^{2n}) \rightarrow L^p(\R\times \T,\R^{2n}), \qquad D_S u := \partial_s u - J_0 \partial_t u - S(s,t) u,
\]
where $S\in C^0(\overline{\R} \times \T, \mathfrak{gl}(2n))$ has asymptotics
\[
S(-\infty,\cdot) = S^-, \qquad S(+\infty,\cdot) = S^+.
\]
The space $\Sigma(S^-,S^+)$ consists of Fredholm operators and the restriction of the determinant bundle to it, that we denote by $\det (\Sigma(S^-,S^+))$, is trivial, hence orientable. 

Two orientations $o(S_1,S_2)$ and $o(S_2,S_3)$ of $ \det (\Sigma(S_1,S_2))$ and $\det (\Sigma(S_2,S_3))$ can be glued and induce an orientation
\[
o(S_1,S_2) \# o(S_2,S_3)
\]
of $\det (\Sigma(S_1,S_3))$. Such a gluing construction is associative. A coherent orientation for $\Sigma$ is the choice of an orientation $o(S^-,S^+)$ of $\det (\Sigma(S^-,S^+))$ for each pair $(S^-,S^+)$ of non-degenerate loops, such that
\[
o(S_1,S_3) = o(S_1,S_2) \# o(S_2,S_3),
\]
for each triplet $(S_1,S_2,S_3)$. The existence of (uncountably many) coherent orientations for $\Sigma$ is proved in \cite[Theorem 12]{fh93}.

Let $M$ be a closed oriented manifold of dimension $n$ and let $H\in C^{\infty}(\T\times T^*M)$ be a Hamiltonian. Let $\tau^* : T^*M \rightarrow M$ denote the bundle projection.
Let $\mathcal{P}(H)$ be the set of 1-periodic orbits of the corresponding Hamiltonian vector field. We assume that every $x\in \mathcal{P}(H)$ is non-degenerate. Let $J$ be a 1-periodic $\omega$-compatible almost complex structure on $T^*M$, which is assumed to be generic, so that for each pair $x^-,x^+\in \mathcal{P}(H)$ the space $\mathcal{M}(x^-.x^+)$ of solutions of the Floer equation
\begin{equation}
\label{floer}
\partial_s u - J(t,u) \bigl(\partial_t u - X_H(t,u) \bigr) = 0,
\end{equation}
which are asymptotic to $x^-$ and $x^+$ for $s \rightarrow -\infty$ and $s\rightarrow +\infty$ is the zero-set of a Fredholm section of a Banach bundle which is transverse to the zero-section. Let us recall how the finite dimensional manifold $\mathcal{M}(x^-,x^+)$ is oriented. 

We fix a coherent orientation for $\Sigma$.
For every $x\in \mathcal{P}(H)$, we fix once and for all a unitary trivialization (with respect to $J$)
\[
\Phi_x : \T \times \C^n \cong x^*(TT^*M)
\]
which is vertical-preserving, meaning that the image of $\T \times \lambda_0$ is the vertical subbundle $x^*(T^v T^*M)$, and such that $\Phi_x|_{\T\times \lambda_0}$ is orientation-preserving. Such trivializations are easily built starting from an orientation-preserving trivialization of $(\tau^* \circ x)^*(TM)$ (recall that $M$ is assumed to be oriented).

For every $u\in \mathcal{M}(x,y)$, we can find a unitary trivialization $\Phi_u$ of $u^*(TT^*M)$ which agrees with $\Phi_x$ and $\Phi_y$ at $\{-\infty\} \times \T$ and $\{+\infty\} \times \T$ (the reason is that every loop in $\mathrm{SO}(n)$ is contractible within $\mathrm{U}(n)$, see \cite[Lemma 1.7]{as06}). Such a trivialization need not be vertical preserving. By using the trivialization $\Phi_u$, the linearization of the Floer equation (\ref{floer}) along $u$ is conjugated to a Fredholm operator $D_S$ in $\Sigma(S_x,S_y)$, where the asymptotic loops $S_x$ and $S_y$ depend on the fixed trivializations $\Phi_x$ and $\Phi_y$. Therefore, the orientation $o(S_x,S_y)$ of $\det(\Sigma(S_x,S_y))$ induces an orientation of $\ker D_S \cong T_u \mathcal{M}(x,y)$. By \cite[Lemma 13]{fh93}, this orientation does not depend on the choice of $\Phi_u$ and defines an orientation of $\mathcal{M}(x,y)$. 

In particular, when $\mu_{CZ}(x)-\mu_{CZ}(y)=1$, $\mathcal{M}(x,y)$ is an oriented one-dimensional manifold. Denoting by $[u]$ the equivalence class of $u$ in the zero-dimensional manifold $\mathcal{M}(x,y)/\R$, the quotient of $\mathcal{M}(x,y)$ by the action of $\R$ by translation, we define
\[
\epsilon([u]) \in \{-1,+1\}
\]
to be $+1$ if the $\R$-action is orientation preserving on the connected component of $u$ in $\mathcal{M}(x,y)$, $-1$ otherwise.

If we also assume that $H$ has quadratic growth on the fibers of $T^*M$ (as in assumptions (H1) and (H2) of \cite{as06}) and that $J$ is $C^0$-close enough to a Levi-Civita almost complex structure, compactness holds and $\mathcal{M}(x,y)/\R$ is a finite set whenever $\mu_{CZ}(x)-\mu_{CZ}(y)=1$. The integers
\[
n(x,y) := \sum_{[u] \in \mathcal{M}(x,y)/\R} \epsilon([u])
\]
are the coefficients of the boundary operator
\[
\partial_k = \partial_k (H,J) : CF_k (H) \rightarrow CF_{k-1}(H), \qquad \partial_k x := \sum_{\substack{y\in \mathcal{P}(H) \\ \mu_{CZ}(y) = k - 1 }} n(x,y)\, y, 
\]
of the standard Floer complex of $(H,J)$. Here, $CF_k(H)$ denotes the free Abelian group generated by periodic orbits of Conley-Zehnder index $k$.

Let $u\in \mathcal{M}(x,y)$. The vertical preserving unitary trivialization $\Phi_{x}$ of $x(TT^*M)$ can be continued to a vertical preserving unitary trivialization $\Psi_u$ of $u^*(TT^*M)$ over $\overline{\R} \times \T$. By restriction to 
\[
y^*(T^v T^*M) \cong (\tau^* \circ y)^* (T^*M),
\]
the composition $\Phi_y^{-1} \circ \Psi_u(+\infty,\cdot)$ defines a loop $U$ in $\mathrm{SO}(n)$. We define $\delta([u])$ to be $+1$ if either $n=1$, or $n=2$ and the homotopy class of $U$ in $\mathrm{SO}(2)$ is an even multiple of the generator of $\pi_1(\mathrm{SO}(2))=\Z$, or $n\geq 3$ and $U$ is contractible in $\mathrm{SO}(n)$ (equivalently and without making a case distinction on the dimension $n$, if $U$ lifts to a closed loop in $\mathrm{Spin}(n)$). Otherwise, we define $\delta([u])$ to be $-1$. It is easy to check that this definition does not depend on the choice of $\Psi_u$. Moreover, it depends only on the homotopy class of $u$ among the cylinders with asymptotics $x$ and $y$. Furthermore, if the pair $(u,v)$ belongs to $\mathcal{M}(x,y) \times \mathcal{M}(y,z)$ and $(u',v')\in \mathcal{M}(x,y') \times \mathcal{M}(y',z)$ is the pair (unique up to translations) which is obtained as right boundary of the component of $\mathcal{M}(x,z)$ having $(u,v)$ as left boundary, we have
\begin{equation}
\label{cpr}
\delta([u]) \delta([v]) = \delta([u']) \delta([v']).
\end{equation}
The coefficients of the twisted Floer complex of $(H,J)$ are defined as
\[
\widehat{n}(x,y) := \sum_{[u]\in \mathcal{M}(x,y)/\R} \epsilon([u]) \delta([u]).
\]
The identity (\ref{cpr}) implies that also the operator
\[
\widehat\partial_k =  \widehat\partial_k (H,J) : CF_k (H) \rightarrow CF_{k-1}(H), \qquad \widehat\partial_k x := \sum_{\substack{y\in \mathcal{P}(H) \\ \mu_{CZ}(y) = k - 1 }} \widehat{n}(x,y)\, y, 
\]
is a boundary. The resulting complex $(CF_*(H),\widehat{\partial}_*)$ is the twisted Floer complex of $(H,J)$. Both the standard and the twisted Floer complex change by a chain isomorphism when the orientation data (that is the coherent orientation for $\Sigma$ and the choice of the trivializations $\Phi_x$, for $x$ in $\mathcal{P}(H)$) are changed.

If the manifold $M$ is Spin, meaning that the second Stiefel-Whitney class \linebreak
$w_2(TM)\in H^2(M;\Z_2)$ of its tangent bundle vanishes, we can choose the vertical-preserving trivializations $\Phi_x$ in such a way that $\delta([u])$ is always $+1$. More generally, this is true if $f^*(w_2(TM))=0$ for every continuous map $f:\T^2 \mapsto M$. An example of a manifold which is not Spin but for which the latter condition holds is the non-trivial $S^2$-bundle over the orientable closed surface of genus 2 (we are grateful to B.\ Martelli for suggesting us this example).

Indeed, the construction goes as follows: we fix an orthogonal and orientation-preserving trivialization of $q_0^*(TM)$ for a loop $q_0:\T \rightarrow M$ in each free homotopy class. If $q:\T \rightarrow M$ is a loop which is freely homotopic to $q_0$ and $w_q:[0,1]\times \T \rightarrow M$ is a homotopy between $q_0$ and $q$, we transport the trivialization of $q_0^*(TM)$ along $w_q$ and get a trivialization $\Psi_q$ of $q^*(TM)$. For every $x\in \mathcal{P}(H)$ we choose $\Phi_x$ to be a vertical-preserving unitary trivialization of $x^*(TT^*M)$ such that the induced trivialization of $(\tau^*\circ x)^*(T^*M) \cong (\tau^*\circ x)^*(TM)$ is homotopic to $\Psi_{\tau^* \circ x}$. We claim that with these choices, if $u:\overline{\R} \times \T \rightarrow T^*M$ is a cylinder which connects two periodic orbits $x$ and $y$, then $\delta([u])=+1$. Indeed, we recall that if $E$ is an $n$-dimensional oriented Riemannian vector bundle over $\T^2$ and $\Phi$ is an orthogonal and orientation preserving trivialization of the restriction of $E$ to a circle $\T\times \{\mathrm{pt}\}$, transportation around the torus defines another trivialization $\Psi$ of the same restriction and the loop $\Psi^{-1}\circ \Phi: \T \rightarrow \mathrm{SO}(n)$ lifts to a closed loop in $\mathrm{Spin}(n)$ if and only if $w_2(E)=0$.
Then our assertion follows from the fact that $w_2(f^*(TM))=0$, where $f:\T^2 \rightarrow M$ is the torus which is obtained  by gluing the three cylinders $w_{\tau^* \circ x}$, $\tau^* \circ u$ and $w_{\tau^* \circ y}(-\cdot,\cdot)$.

With the above choices, the twisted Floer complex coincides with the standard one. Since the homology of both Floer complexes does not depend on the choice of the trivializations $\Phi_x$, we conclude that when $w_2(TM)$ vanishes on 2-tori the twisted Floer homology of $T^*M$ coincides with the standard one.
  
\section{The chain isomorphism $\Theta$}
\label{tre}

In this section we recall the definition of the spaces of half-cylinders which determine the isomorphism $\Theta$, together with the construction of their orientations, and we explain why $\Theta$ is indeed a chain isomorphism from the Morse complex of the Lagrangian action functional to the twisted Floer complex. 

If $S_1,S_2 \in C^0(\T,\mathrm{Sym}(2n))$ are two non-degenerate loops,  orientations $o(S_1)$ of $\det(\Sigma^+(S_1))$ and $o(S_1,S_2)$ of $\det (\Sigma(S_1,S_2))$ induce an orientation
\[
o(S_1) \# o(S_1,S_2)
\]
of $\det(\Sigma^+(S_2))$. This construction is associative. Fix a coherent orientation for $\Sigma$. A compatible orientation for $\Sigma^+$ is the choice of an orientation $o(S)$ of the line bundle $\det(\Sigma^+(S))$, for every non-degenerate loop $S$, such that for every pair $(S_1,S_2)$ there holds
\[
o(S_2) = o(S_1) \# o(S_1,S_2).
\]
There exist exactly two orientations for $\Sigma^+$ which are compatible with a given coherent orientation for $\Sigma$.

Assume that the Hamiltonian $H$, which is still assumed to satisfy the assumption of the previous section, is also uniformly fiberwise convex, meaning that
\[
d_{pp} H(t,q,p) \geq h_0 I, \qquad \forall (t,q,p)\in \T \times T^*M,
\]
for some $h_0>0$. In this case, Legendre duality defines a Lagrangian $L\in C^{\infty}(\T \times TM)$, whose corresponding action functional
\[
\mathcal{E}(q) := \int_{\T} L\bigl(t,q(t),q'(t)\bigr) \, dt
\]
has a well-defined Morse complex 
\[
\partial_* = \partial_*(\mathcal{E},Y) : CM_*(\mathcal{E}) \longrightarrow CM_{*-1}(\mathcal{E})
\] 
on the Hilbert manifold $\Lambda^1(M)$ of closed loops in $M$ of Sobolev class $W^{1,2}$. Here $Y$ is a smooth negative pseudo-gradient vector field for $\mathcal{E}$ with the Morse-Smale property (indeed, under the above assumptions on $L$, the functional $\mathcal{E}$ is just of class $C^{1,1}$ on $\Lambda^1(M)$, so it is more convenient to use a smooth pseudo-gradient vector field, rather than the gradient vector field with respect to a Riemannian metric, which would be just Lipschitz continuous; this regularity issue had been overlooked in \cite{as06}, but has been already corrected in \cite{as09b}). 

Denote by $\mathcal{P}(L)$ the set of critical points of $\mathcal{E}$, which coincides with the set of all loops $\tau^*\circ x$ for $x\in \mathcal{P}(H)$. If $q\in \mathcal{P}(L)$ and $x\in \mathcal{P}(H)$, the space $\mathcal{M}^+(q,x)$ is the set of all solutions $u: [0,+\infty[ \times \T \rightarrow T^*M$ of the Floer equation (\ref{floer}) such that $u(s,\cdot)$ converges to $x$ for $s\rightarrow +\infty$ and
the loop $\tau^* \circ u(0,\cdot)$ belongs to the unstable manifold $W^u(q)$ of $q$ with respect to the flow of $Y$. 

For a generic choice of the almost complex structure which appears in the Floer equation, $\mathcal{M}^+(q,x)$ is the zero-set of a section 
\[
\partial_{J,H}^+ : \mathcal{B}^+(q,x) \rightarrow \mathcal{W}^+(q,x)
\]
of a Banach bundle which is transverse to the zero-section, and it is a manifold of dimension $m(q) - \mu_{CZ}(x)$, where $m(q)$ denotes the Morse index of the critical point $q$ of $\mathcal{E}$ (see \cite[Section 3.1]{as06}).

Let us fix a coherent orientation for $\Sigma$ and a vertical preserving unitary trivialization of $x^*(TT^*M)$ for every $x\in \mathcal{P}(H)$ as in the previous section. Let us fix also an orientation for $\Sigma^+$ which is compatible with the coherent orientation for $\Sigma$ and an orientation of the unstable manifold of every critical point of $\mathcal{E}$. 

These data determine an orientation of $\mathcal{M}^+(q,x)$, for every pair $(q,x) \in \mathcal{P}(L) \times \mathcal{P}(H)$. In fact, if $u\in \mathcal{M}^+(q,x)$, the bounded linear operator
\[
T_u \mathcal{B}^+(q,x) \rightarrow T_{\tau^* \circ u(0,\cdot)} W^u(q), \qquad w \mapsto D\tau^* (u(0,\cdot)) [w(0,\cdot)]
\]
is surjective, has a kernel $W_u$ of codimension $m(q)$, and its codomain is oriented by the orientation of  the unstable manifold $W^u(q)$. Therefore, the quotient 
\[
T_u \mathcal{B}^+(q,x) /W_u
\]
is oriented, and so is the line
\[
\Lambda^{\max} \bigl( T_u \mathcal{B}^+(q,x) /W_u \bigr).
\]
Let $\Phi_u$ be a vertical-preserving unitary trivialization of $u^*(TT^*M)$ over $[0,+\infty] \times \T$ which agrees with $\Phi_x$ at $\{+\infty\}\times \T$. This trivialization conjugates the restriction to $W_u$ of the fiberwise differential at $u$ of the section $\partial_{J,H}^+$ to an operator $D_S^+$ in $\Sigma^+(S_x)$. Therefore, 
\[
\det \bigl( D_f \partial_{J,H}^+ (u)|_{W_u} \bigr) \cong \det\bigl(D_S^+\bigr)
\]
inherits an orientation from $o(S_x)$. Proposition \ref{easy} implies that this orientation does not depend on the choice of the vertical-preserving trivialization $\Phi_u$. Indeed, if $\Psi_u$ is another vertical-preserving trivialization, the transition map $U = \Psi_u^{-1} \circ \Phi_u$ takes values into $\mathrm{SO}(n)$ and is such that $U(+\infty,t)=I$. 

From the canonical isomorphism
\[
\det \bigl( D_f \partial_{J,H}^+ (u) \bigr) \cong \det \bigl( D_f \partial_{J,H}^+ (u)|_{W_u} \bigr) \otimes \Lambda^{\max} \bigl( T_u \mathcal{B}^+(q,x) /W_u \bigr),
\]
we get the required orientation of 
\[
\det \bigl( D_f \partial_{J,H}^+ (u) \bigr) = \Lambda^{\max} \bigl( T_u \mathcal{M}^+(q,x) \bigr),
\]
so $\mathcal{M}^+(q,x)$ is oriented. 

In particular, when $m(q) = \mu_{CZ}(x)$, the zero-dimensional manifold $\mathcal{M}^+(q,x)$ is oriented, meaning that each point $u\in \mathcal{M}^+(q,x)$ is given a number $\epsilon^+(u)\in \{-1,+1\}$. In this case, the integer $n^+(q,x)$ is defined as
\[
n^+(q,x) := \sum_{u\in \mathcal{M}^+(q,x)} \epsilon^+(u).
\]
The homomorphism 
\[
\Theta : CM_*( \mathcal{E})  \longrightarrow CF_*(H)
\]
is defined generator-wise as
\[
\Theta q := \sum_{\substack{x\in \mathcal{P}(H)\\ \mu_{CZ}(x) = m(q)}} n^+(q,x)\, x, \qquad \forall q\in \mathcal{P}(L).
\]
The correct formulation of the periodic part of Theorem 3.1 of \cite{as06} is:

\begin{thm}
The map $\Theta$ is a chain isomorphism from the Morse complex 
\[
\bigl\{CM_*(\mathcal{E}), \partial_*(\mathcal{E},Y)\bigr\}
\] 
of the Lagrangian action functional associated to $L$ to the twisted Floer complex 
\[
\bigl\{CF_*(H), \widehat{\partial}_*(H,J)\bigr\}
\]
of the dual Hamiltonian $H$.
\end{thm}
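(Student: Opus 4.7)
I would adapt the argument of \cite{as06} for the (untwisted) chain map property and pinpoint where the twist $\delta$ enters. For a pair $(q, y) \in \mathcal{P}(L) \times \mathcal{P}(H)$ with $m(q) - \mu_{CZ}(y) = 1$, the oriented $1$-dimensional moduli space $\mathcal{M}^+(q, y)$ admits the usual compactification whose boundary is the disjoint union of Morse breakings (a $Y$-trajectory from $q$ to some $q' \in \mathcal{P}(L)$ with $m(q') = \mu_{CZ}(y)$, followed by an element $u'' \in \mathcal{M}^+(q', y)$) and Floer breakings $(u', [v]) \in \mathcal{M}^+(q, x) \times \mathcal{M}(x, y)/\R$ with $\mu_{CZ}(x) = \mu_{CZ}(y) + 1$. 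The Morse contribution to the signed boundary count yields the $y$-coefficient of $\Theta \partial q$ exactly as in \cite{as06}, since its orientation analysis is internal to the Morse complex and is unaffected by the trivialization issues of Section \ref{uno}. The whole problem therefore reduces to showing that the signed contribution of a Floer breaking is $\epsilon^+(u') \epsilon([v]) \delta([v])$, so that summation yields the $y$-coefficient of $\widehat{\partial}\Theta q$ and the vanishing of the signed boundary count gives $\Theta\partial = \widehat{\partial}\Theta$.

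For a Floer breaking $(u', [v])$, let $u \in \mathcal{M}^+(q, y)$ be a glued half-cylinder close to the broken configuration. The standard gluing theorem for coherent orientations identifies the orientation of the associated Fredholm operator $D_u$, computed via the concatenated trivialization $\Phi_{u'} \# \Phi_v$, with the product $\epsilon^+(u') \epsilon([v])$; here $\Phi_v$ is the (generally non vertical-preserving) trivialization of $v^*(TT^*M)$ used in the standard Floer differential. However, as recalled in Section \ref{tre}, the orientation of $u$ as a point of $\mathcal{M}^+(q, y)$ is computed using a vertical-preserving trivialization $\Phi_u$ of $u^*(TT^*M)$ agreeing with $\Phi_y$ at $+\infty$. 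The transition
\[
U := \Phi_u^{-1} \circ (\Phi_{u'} \# \Phi_v)
\]
lies in $C^{\infty}([0, +\infty] \times \T, \mathrm{U}(n))$ with $U(+\infty, \cdot) = I$ and $U(0, \cdot) \in \mathrm{SO}(n)$, precisely the class of conjugacies controlled by Proposition \ref{complex}. To determine the $\mathrm{SO}(n)$-homotopy class of $U(0, \cdot)$, let $\Psi_v$ denote the vertical-preserving extension of $\Phi_x$ along $v$, whose value at $+\infty$ equals $\Phi_y \cdot \ell$ for some loop $\ell \in \mathrm{SO}(n)$ whose free homotopy class defines $\delta([v])$. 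Write $\Phi_v = \Psi_v \cdot R^{-1}$ with $R : \overline{\R} \times \T \to \mathrm{U}(n)$ satisfying $R(-\infty, \cdot) = I$ and $R(+\infty, \cdot) = \ell$; such $R$ exists by \cite[Lemma 1.7]{as06} and is unique up to homotopy rel ends since $\pi_2(\mathrm{U}(n)) = 0$. Setting $V := \Phi_u^{-1} \circ (\Phi_{u'} \# \Psi_v)$, one obtains an $\mathrm{SO}(n)$-valued map with $V(+\infty, \cdot) = \ell$ and $U = V \circ R^{-1}$; since $R \equiv I$ on the $u'$-portion of the gluing, $U(0, \cdot) = V(0, \cdot)$ is freely homotopic in $\mathrm{SO}(n)$ to $\ell$. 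Proposition \ref{complex} then gives that the orientations of $\det(D_u)$ induced by $\Phi_u$ and by $\Phi_{u'} \# \Phi_v$ differ by exactly $\delta([v])$, so the signed contribution of the Floer breaking is $\epsilon^+(u') \epsilon([v]) \delta([v])$, and the chain map property follows.

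For the isomorphism statement, the definition of $\Theta$ and of the counts $n^+(q, x)$ is identical to that of \cite{as06}, so the proof by action filtration proceeds unchanged: on the associated graded, $\Theta$ pairs each $q \in \mathcal{P}(L)$ bijectively and with sign $\pm 1$ with the unique $x \in \mathcal{P}(H)$ satisfying $\tau^* \circ x = q$ and $m(q) = \mu_{CZ}(x)$, via a model half-cylinder computation that involves no Floer cylinder between distinct orbits, so the twist $\delta$ plays no role. I expect the main technical effort to lie in the identification of $U(0, \cdot)$ with $\ell$ up to free homotopy in $\mathrm{SO}(n)$: this requires careful bookkeeping in the gluing theorem for Fredholm orientations and a clean geometric interpretation of how the $\mathrm{Spin}$-obstruction of $v$ migrates to the seam of the glued half-cylinder when the comparison trivialization $\Phi_u$ is forced to end at $\Phi_y$ rather than at $\Phi_y \cdot \ell$ at $+\infty$.
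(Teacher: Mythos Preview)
Your proposal is correct and follows essentially the same route as the paper: analyze the boundary of the one-dimensional space $\mathcal{M}^+(q,y)$, note that the Morse-side breaking goes through unchanged, and on the Floer side compare the orientation induced by the glued trivialization $\Phi_{u'}\#\Phi_v$ with the one induced by a vertical-preserving trivialization of the glued half-cylinder, invoking Proposition~\ref{complex} to produce the factor $\delta([v])$. The paper's argument is terser on the crucial point---it simply asserts that the change-of-trivialization map $U$ lies in the class governed by Proposition~\ref{complex} and that the resulting sign is $\delta([v])$---whereas you supply the missing bookkeeping: by introducing the vertical-preserving extension $\Psi_v$ of $\Phi_x$ along $v$ and writing $\Phi_v=\Psi_v\cdot R^{-1}$, you obtain an $\mathrm{SO}(n)$-valued homotopy $V$ from $U(0,\cdot)$ to the loop $\ell=\Phi_y^{-1}\Psi_v(+\infty,\cdot)$ that defines $\delta([v])$. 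This is exactly the identification the paper leaves implicit, and your argument for it is clean and correct. Your treatment of the isomorphism part (action filtration, unchanged from \cite{as06}) is also in line with the paper, which does not revisit that point in the corrigendum.
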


Let us explain why $\Theta$ is a chain map. Let $q\in \mathcal{P}(L)$ and $y\in \mathcal{P}(H)$, with $\mu_{CZ}(y) = \mu(q)-1$. The coefficient of $y$ in $\widehat\partial \Theta q$ is the sum
\begin{equation}
\label{dete}
\sum_{\substack{x\in \mathcal{P}(H) \\ \mu_{CZ}(x) = m(q)}} n^+(q,x) \widehat{n}(x,y),
\end{equation}
while its coefficient in $\Theta \partial q$ is the sum
\begin{equation}
\label{tede}
\sum_{\substack{r\in \mathcal{P}(L) \\ m(r) = m(q)-1}} n_M(q,r) n^+(x,y),
\end{equation}
where $n_M(r,q)$ are the coefficients of the Morse complex. In order to show that the numbers (\ref{dete}) and (\ref{tede}) coincide, one has to analyze two different gluing situations.

In the first situation, we have $x\in \mathcal{P}(H)$ with $\mu_{CZ}(x) = m(q)$, $u\in \mathcal{M}^+(q,x)$ and $v\in \mathcal{M}(x,y)$. The pair $(u,v)$ contributes to the sum (\ref{dete}) by either $+1$ or $-1$.
Let $W$ be the connected component of the one-dimensional manifold $\mathcal{M}^+(q,y)$ having the pair $(u,v)$ as one of the two limiting points. Let $\Phi_u$ be a vertical-preserving unitary trivialization of $u^*(TT^*M)$ which agrees with $\Phi_x$ at $\{+\infty\} \times \T$ and let $\Phi_v$ be a unitary trivialization of $v^*(TT^*M)$ which agrees with $\Phi_x$ and $\Phi_y$ at $\{-\infty\} \times \T$ and  $\{+\infty\} \times \T$. Assume first that also $\Phi_v$ can be chosen to be vertical-preserving. In this case, $\delta([v])=+1$. Moreover, the trivializations $\Phi_u$ and $\Phi_v$ can be glued and then slightly perturbed in order to produce a vertical preserving trivialization $\Phi_w$ of $w^*(TT^*M)$, where $w$ is an element of $W$ close to the limiting point $(u,v)$. The trivialization of $\Phi_w$ is an admissible one in the definition of the orientation of $W$, so we deduce that in this case the orientation of $W$ is compatible with the orientation $\epsilon^+(u) \epsilon([v])$ of its limiting point $(u,v)$. Consider now the general case, in which it might be impossible to choose $\Phi_v$ to be vertical-preserving and with the given asymptotics. Changing the trivialization which is obtained by gluing $\Phi_u$ and $\Phi_v$ into a vertical-preserving one involves multiplication by a map $U:[0,+\infty]\times \T \rightarrow \mathrm{U}(n)$ such that $U(0,t)\in \mathrm{SO}(n)$ and $U(+\infty,t)=I$, for every $t\in \T$. By Proposition \ref{complex}, in this case the orientation of $W$ is compatible with the orientation $\epsilon^+(u) \epsilon([v])$ of its limiting point $(u,v)$ if and only if $\delta([v])=1$. We conclude that in every case the orientation of $W$ is compatible with the orientation $\epsilon^+(u) \epsilon([v]) \delta([v])$ of its limiting point $(u,v)$.

The second gluing situation arises on the Morse side and presents no difficulties: we have $r\in \mathcal{P}(L)$ with $m(r) = m(q)-1$, an orbit $\gamma$ in $W^u(q)\cap W^s(r)$ and an element $u\in \mathcal{M}^+(r,y)$. If $W$ is the connected component of the one-dimensional manifold $\mathcal{M}^+(q,y)$ having the pair $(\gamma,u)$ as one of the two limiting points, then the orientation of $W$ is compatible with the orientation of its limiting point $(\gamma,u)$. The standard cobordism argument implies that the two numbers (\ref{dete}) and (\ref{tede}) coincide, and hence $\Theta$ is a chain map.

\section{Proof of Proposition \ref{complex}}
\label{propro}

Let us start with the following lemma, whose proof is similar to that of Lemma 13 in \cite{fh93}:

\begin{lem}
\label{contr}
Assume moreover that the loop $U(0,\cdot)$ is contractible in $\mathrm{SO}(n)$. Then the canonical lift to the determinant bundle of the map (\ref{mappa}) is 
orientation-preserving.
\end{lem}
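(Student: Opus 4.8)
\section*{Proof proposal for Lemma \ref{contr}}

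The plan is a deformation argument, in the spirit of \cite[Lemma 13]{fh93}. Call a map $U'\in C^{\infty}([0,+\infty]\times\T,\mathrm{U}(n))$ \emph{admissible} if $U'(0,\cdot)$ is $\mathrm{O}(n)$-valued and $U'(+\infty,\cdot)=I$; for such $U'$ the map $D_S^+\mapsto U'D_S^+U'^{-1}$ is a self-map of $\Sigma^+(S^+)$, as recalled above. Its canonical lift is an automorphism of the line bundle $\det(\Sigma^+(S^+))$ covering a self-map of the connected base $\Sigma^+(S^+)$, so it has a well-defined sign in $\{+1,-1\}$ (independent of the chosen orientation), and this sign is locally constant along any continuous path $\sigma\mapsto U_\sigma$ of admissible maps, because the induced determinant-bundle isomorphism depends continuously on $U_\sigma$, by the functoriality of $\det$ recalled above applied to the isomorphisms ``multiplication by $U_\sigma$''. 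Since $U\equiv I$ induces the identity, of sign $+1$, it is enough to connect the given $U$ to the constant map $I$ through a suitable path of maps.

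\emph{Step 1: flattening the boundary loop.} By hypothesis there is a smooth homotopy $g\in C^{\infty}([0,1]\times\T,\mathrm{SO}(n))$ with $g(0,\cdot)=U(0,\cdot)$ and $g(1,\cdot)\equiv I$. Fix $\rho\in C^{\infty}([0,+\infty],[0,1])$ with $\rho(0)=1$ and $\rho(+\infty)=0$, and set
\[
U_\sigma(s,t):=U(s,t)\,g(0,t)^{-1}\,g\bigl(\sigma\rho(s),t\bigr),\qquad \sigma\in[0,1].
\]
Then $U_0=U$ and each $U_\sigma$ is admissible: the correction factor $g(0,t)^{-1}g(\sigma\rho(s),t)$ is $\mathrm{SO}(n)$-valued and tends to $I$ as $s\to+\infty$, so $U_\sigma(+\infty,\cdot)=I$, while $U_\sigma(0,t)=U(0,t)g(0,t)^{-1}g(\sigma,t)=g(\sigma,t)\in\mathrm{SO}(n)$. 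Since $U_1(0,\cdot)=g(1,\cdot)\equiv I$, we may assume from now on that $U(0,\cdot)\equiv I$.

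\emph{Step 2: unwinding the residual twist.} Now $U$ is constant $=I$ on both $\{0\}\times\T$ and $\{+\infty\}\times\T$, hence factors through the unreduced suspension $S^2$ of $\T$, sending the two cone points to $I$; since $\pi_2(\mathrm{U}(n))=0$, such a map is classified rel these two points by the winding number $d\in\pi_1(\mathrm{U}(n))=\Z$ of the meridian loop $s\mapsto U(s,t_0)$, and within the class of admissible maps $U$ is homotopic to $I$ precisely when $d=0$. To kill a nonzero $d$ one enlarges the deformation class, exactly as in \cite[Lemma 13]{fh93}: I would first deform $U$ (keeping $U_\sigma(0,\cdot)\equiv I$ and $U_\sigma(+\infty,\cdot)=I$) to the model $\mathrm{diag}\bigl(e^{id\alpha(s)},I_{n-1}\bigr)$ with $\alpha\in C^{\infty}([0,+\infty])$ monotone, $\alpha(0)=0$, $\alpha(+\infty)=2\pi$ — possible since the model carries the same invariant $d$ — and then deform along $U_\sigma(s,t):=\mathrm{diag}\bigl(e^{i(1-\sigma)d\alpha(s)},I_{n-1}\bigr)$ to the constant $I$. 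Along this last path $U_\sigma(0,\cdot)\equiv I\in\mathrm{SO}(n)$, but $U_\sigma(+\infty,\cdot)=V_\sigma:=\mathrm{diag}\bigl(e^{2\pi i(1-\sigma)d},I_{n-1}\bigr)$ is a constant unitary matrix which need not be $I$, so $U_\sigma D_S^+U_\sigma^{-1}$ runs through operators in $\Sigma^+(V_\sigma S^+V_\sigma^{-1})$; conjugation by a constant unitary matrix preserves the Conley-Zehnder index, so all of these lie in one connected component of $\Sigma^+$, on which the determinant bundle is orientable because $\Sigma^+$ admits a compatible orientation. With respect to this orientation the sign of the lift is again locally constant in $\sigma$, hence equal to its value $+1$ at $\sigma=1$; since the orientation of $\det(\Sigma^+(S^+))$ is the restriction of this one, the canonical lift of (\ref{mappa}) is orientation preserving.

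I expect Step 1 and the homotopy-theoretic bookkeeping to be routine; the point that needs care, and where the real content of the lemma lies, is the passage in Step 2 to the enlarged deformation class — one must make sure that relaxing $U_\sigma(+\infty,\cdot)=I$ to ``$U_\sigma(+\infty,\cdot)$ a constant unitary matrix'' does not change the orientation question, which rests on the orientability of the determinant bundle over a fixed Conley-Zehnder component of $\Sigma^+$, i.e.\ on the existence of coherent and compatible orientations. Alternatively one could avoid the enlargement and prove directly, by an explicit computation of the kind found in \cite{des98} and in the proof of \cite[Proposition 8.1.7]{fooo09}, that conjugation by the model $\mathrm{diag}\bigl(e^{id\alpha(s)},I_{n-1}\bigr)$ — a full $2\pi d$-twist supported near $\{+\infty\}\times\T$ — acts trivially on the orientation of $\det(\Sigma^+(S^+))$.
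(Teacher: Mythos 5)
Your Step 1 (flattening the boundary loop to the identity using the contractibility of $U(0,\cdot)$ in $\mathrm{SO}(n)$, via a homotopy through admissible conjugacies) and your reduction to the block model $\mathrm{diag}(e^{id\alpha(s)},I_{n-1})$ with $\alpha(0)=0$, $\alpha(+\infty)=2\pi$ match the paper's first homotopy to $V(s)=\bigl(e^{2\pi\theta(s)i}I_\C\bigr)\oplus I_{\C^{n-1}}$ with $\theta(0),\theta(+\infty)\in\Z$. The divergence, and the gap, is in Step 2.

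You conclude that a nonzero winding number $d$ forces you out of the class of conjugacies fixing $U(+\infty,\cdot)=I$, and you accordingly deform through $U_\sigma(s,t)=\mathrm{diag}(e^{i(1-\sigma)d\alpha(s)},I_{n-1})$, whose value at $+\infty$ sweeps across $\mathrm{U}(1)\times\{I\}$. You then appeal to ``orientability of the determinant bundle over a fixed Conley--Zehnder component of $\Sigma^+$'' to assert that the resulting sign is still locally constant. This is the unjustified step. The existence of compatible orientations for $\Sigma^+$ is a combinatorial statement --- a choice of orientation $o(S)$ for each asymptotic loop $S$, compatible with the gluing axiom --- and it does not by itself supply a continuous section of the determinant line bundle over the union $\bigcup_S\Sigma^+(S)$, nor rule out monodromy along your loop $\sigma\mapsto V_\sigma S^+V_\sigma^{-1}$. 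Indeed, the entire burden of Proposition \ref{complex} is to compute exactly such a monodromy-type sign; deriving Lemma \ref{contr} from a general orientability claim over a large component of $\Sigma^+$ is close to circular.

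The paper proceeds differently and never relaxes the condition at $+\infty$. After the reduction to $V$, it uses a gluing argument to reduce to $S^+=-\pi I$ and the block structure to reduce to $n=1$; then it applies the rescaling trick $V_r(s):=V(s/r)$, $r\in[1,\infty)$. Each $V_r$ still satisfies $V_r(0)=V_r(+\infty)=I$, hence $V_rD^+_{-\pi I}V_r^{-1}\in\Sigma^+(-\pi I)$ for every $r$ (the winding stays $d$, it is merely spread out), while the conjugated \emph{operators} $V_rD^+_{-\pi I}V_r^{-1}=\partial_s-J_0\partial_t-\frac{2\pi i}{r}\theta'(s/r)+\pi I$ converge to $D^+_{-\pi I}$ in the operator norm as $r\to\infty$. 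The key point you missed is that it is the operators, not the conjugacies, that one deforms to $D^+_{-\pi I}$, all within the (contractible, Fredholm) set $\Sigma^+(-\pi I)$. A bounded projection $P$ onto $\ker D^+_{-\pi I}=\mathrm{Span}_\R(ie^{-\pi s})$ then restricts to an isomorphism $\ker V_rD^+_{-\pi I}V_r^{-1}\to\ker D^+_{-\pi I}$ for $r$ large, and the composition $P\circ V_r$ sends $ie^{-\pi s}$ to $2\pi\bigl(\int_0^\infty e^{-2\pi\sigma}(\cos(2\pi\theta(\sigma/r))-\sin(2\pi\theta(\sigma/r)))\,d\sigma\bigr)ie^{-\pi s}$, and the integral tends to $1/2\pi$ by dominated convergence, giving a positive scalar. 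This is the content of your alternative at the very end (``prove directly, by an explicit computation, that conjugation by the model acts trivially''); the paper carries it out, with the rescaling as the device that makes the computation tractable.

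To salvage your Step 2, first reduce by gluing to $S^+=-\pi I$. In that case $V_\sigma S^+V_\sigma^{-1}=-\pi I$ identically, so every operator $U_\sigma D_A^+U_\sigma^{-1}$ stays in the single contractible space $\Sigma^+(-\pi I)$, the determinant bundle is trivial over it, and your locally-constant-sign argument closes without needing any claim about orientability over a varying family of asymptotics.
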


\begin{proof}
Since $U(0,\cdot)$ is contractible in $\mathrm{SO}(n)$ and $\pi_2(\mathrm{U}(n))=0$, we can find a homotopy
\[
r\mapsto U_r \in C^{\infty}( [0,+\infty] \times \T, \mathrm{U}(n) ) , \quad r\in [0,1],
\]
such that
\[
U_0 = U, \qquad U_1(s,t) = V(s) \quad \mbox{with} \quad V(0)=V(+\infty)=I, \quad \forall (s,t)\in [0,+\infty] \times \T.
\]
Since $\pi_1(\mathrm{U}(n))=\Z$ is generated by the homotopy class of the loop 
\[
\theta \mapsto \bigl( e^{2\pi \theta i} I_{\C}\bigr) \oplus I_{\C^{n-1}}, \qquad \theta\in \T,
\]
we can also assume that
\begin{equation}
\label{formaV}
V(s) = \bigl( e^{2\pi \theta(s) i} I_{\C}\bigr) \oplus I_{\C^{n-1}},
\end{equation}
where $\theta$ is a smooth real valued function on $[0,+\infty]$ such that $\theta(0)$ and $\theta(+\infty)$ are integers.
By using such a homotopy, it is enough to show that the lift to the determinant bundle of the map
\[
\Sigma^+(S^+) \rightarrow \Sigma^+(S^+), \qquad
D_S^+ \mapsto V D_S^+ V^{-1}
\]
is orientation-preserving.

By gluing, it is enough to check this fact for a particular $S^+$, for instance 
\[
S^+(t) = -\pi I, \qquad \forall t\in \T.
\]
An element of $\Sigma^+(-\pi I)$ is the operator with constant coefficients $D_{-\pi I}^+$ and it is enough to check that the canonical map
\[
\det (D^+_{-\pi I}) \longrightarrow \det ( V D^+_{-\pi I} V^{-1})
\]
is orientation-preserving. By the form (\ref{formaV}) of $V$, we may assume that $n=1$ and
\[
V(s) = e^{2\pi i \theta(s)}, \qquad \forall s\in [0,+\infty].
\]
In this case, the operator $D_{-\pi I}^+$ is surjective and its kernel is 
\[
\ker D_{-\pi I}^+ = \mathrm{Span}_{\R} (i e^{-\pi s}).
\]
See Claim 2 in the proof of \cite[Theorem 3.4]{as06}. A projector onto this one-dimensional space is the operator 
\begin{eqnarray*}
& P : W^{1,p}_{\lambda_0}(]0,+\infty[\times \T,\C) \rightarrow W^{1,p}_{\lambda_0}(]0,+\infty[\times \T,\C), & \\ & (Pu)(s,t) := 2\pi  \left( \displaystyle{\iint_{[0,+\infty[\times \T}} e^{-\pi \sigma} \im u(\sigma,\tau) \, d\sigma\, d\tau \right) i e^{-\pi s}. &
\end{eqnarray*}
By considering the homotopy
\[
V_r(s) := V(s/r) = e^{2\pi i \theta(s/r)}, \qquad r\in [1,+\infty[,
\]
it is enough to check that the canonical map
\[
\det (D^+_{-\pi I}) = \Lambda^1 (\ker D^+_{-\pi I}) \longrightarrow \det ( V_r D^+_{-\pi I} V^{-1}_r) = \Lambda^1 (\ker V_r D^+_{-\pi I} V_r^{-1})
\]
is orientation-preserving when $r$ is large. By (\ref{ide}), the family of operators
\[
V_r D^+_{-\pi I} V^{-1}_r = \partial_s - J_0 \partial_t - \frac{2\pi i}{r} \theta'(s/r) + \pi I
\]
converges to $D^+_{-\pi I}$ in the operator norm for $r\rightarrow +\infty$. Therefore, the restriction
\[
P|_{\ker V_r D^+_{-\pi I} V^{-1}_r} : \ker V_r D^+_{-\pi I} V^{-1}_r \rightarrow  \ker D^+_{-\pi I} 
\] 
is an isomorphism for $r$ large enough and it induces an orientation-preserving map between $\Lambda^1$ of these spaces. Thus, it is enough to check that for $r$ large the composition $P\circ V_r$ of the maps
\[
\ker D^+_{-\pi I} \rightarrow \ker V_r D_{-\pi I}^+ V_r^{-1}, \quad u \mapsto V_r u=e^{2\pi i \theta(\cdot/r)} u,
\]
and
\[
 \ker V_r D^+_{-\pi I} V^{-1}_r \rightarrow  \ker D^+_{-\pi I}, \quad u \mapsto Pu,
\]
is orientation-preserving. Such a composition maps the generator $i e^{-\pi s}$ into
\[
2\pi  \left( \int_0^{+\infty} e^{-2\pi \sigma} \bigl( \cos (2\pi \theta(\sigma/r)) - \sin (2\pi \theta(\sigma/r))\bigr) \, d\sigma \right) i e^{-\pi s}.
\]
By dominated convergence, the above integral tends to $1/2\pi$ for $r\rightarrow +\infty$, so for $r$ large enough the generator $i e^{-\pi s}$ is mapped into a positive multiple of itself, which proves that the above composition is orientation-preserving.
\end{proof}

Let $n=2$ and let us identify $\R^4$ with $\C^2$. From the proof of Theorem 3.4 in \cite{as06}, in particular Claim 2, we know that the operator $D_{-\pi I}^+$ is surjective and has the 2-dimensional kernel 
\[
\set{ e^{-\pi s} i z}{z\in \R^2}.
\]
Let $\varphi\in C^{\infty}(\R)$ be a non-decreasing function such that $\varphi(s)=0$ for $s\leq 0$ and $\varphi(s)=1$ for $s\geq 1/2$. 
Set, for $0\leq s \leq 1/2$, 
\[
W(s,t) := \omega(s) \left( \begin{array}{cc} \bigl( 1 - \varphi(s) \bigr) \cos (2\pi t) - \varphi(s) i & - \bigl( 1 - \varphi(s) \bigr) \sin (2\pi t) \\ \bigl( 1 - \varphi(s) \bigr) \sin (2\pi t) & \bigl( 1 - \varphi(s) \bigr) \cos (2\pi t) + \varphi(s) i \end{array} \right), 
\]
where
\[
\omega(s):= \frac{1}{\sqrt{ \varphi(s)^2 + \bigl( 1 - \varphi(s) \bigr)^2 }},
\]
and, for $s\geq 1/2$,
\[
W(s,t) := \left( \begin{array}{cc} - i e^{i \frac{\pi}{2} \varphi(s-\frac{1}{2})} & 0 \\ 0 & i e^{-i \frac{\pi}{2} \varphi(s-\frac{1}{2})} \end{array} \right).
\]
It is easy to check that
\[
W \in C^{\infty}( [0,+\infty] \times \T, \mathrm{U}(2)),
\]
and
\[
W(0,t) = \left( \begin{array}{cc}  \cos (2\pi t) & -  \sin (2\pi t) \\  \sin (2\pi t) & \cos (2\pi t)  \end{array} \right), \qquad W(s,t) = I \quad \forall s\geq 1,
\]
for every $t\in \T$.

We also set, for $r\in [0,1]$,
\[
W_r(s,t) := W(r+s,t), \quad \mbox{so that } W_0 = W \mbox{ and } W_1 = I,
\]
and
\[
T_r := (\partial_s W_r) W_r^{-1} - i (\partial_t W_r) W_r^{-1} - \pi I.
\]
We consider the path
\[
[0,1] \rightarrow \Sigma^+(-\pi I), \qquad r\mapsto D^+_{T_r},
\]
in dimension $n=2$. 

By the identity (\ref{ide}),
\begin{equation}
\label{ide2}
W_r ( \partial_s - i \partial_t + \pi I) W_r^{-1} = \partial_s - i \partial_t - T_r.
\end{equation}
In particular, for $r=0$ there holds
\[
D_{T_0}^+ = W D_{-\pi I}^+ W^{-1},
\]
while for $r=1$
\[
D_{T_1}^+ = D_{-\pi I}^+.
\]
Since  the multiplication by $W_r^{-1}$ need not preserve the boundary condition $u(0,\cdot)\in \lambda_0 = i \R^2$, for an arbitrary $r\in [0,1]$ the operator $D_{T_r}^+$ is not related to $D_{-\pi I}^+$ by conjugacy. However, (\ref{ide2}) implies that
\begin{eqnarray*}
\ker D_{T_r}^+ = \bigl\{ W_r u\; \big| &&  u\in W^{1,p}(]0,+\infty[\times \T,\C^2), \; \partial_s u - i \partial_t u + \pi u = 0, \\ && \re W_r(0,t) u(0,t) = 0 \; \forall t\in \T \bigr\}.
\end{eqnarray*}
A $\C^2$-valued function $u$ solves the equation
\[
\partial_s u - i \partial_t u + \pi u = 0
\]
on $[0,+\infty[ \times \T$ if and only if
\[
u(s,t) = \sum_{k\in \Z} e^{2\pi i k t} e^{-\pi(2k+1)s} u_k, \qquad \mbox{with } u_k\in \C^2.
\]
If such a function is in $W^{1,p}(]0,+\infty[\times \T,\C^2)$, then all the coefficients $u_k$ with $k<0$ vanish. Therefore,
\begin{equation}
\label{nucleo}
\begin{split}
\ker D_{T_r}^+ = \bigl\{ W_r u\; \big| \; &  u(s,t) = \sum_{k\geq 0} e^{2\pi i k t} e^{-\pi (2k+1)s} u_k, \mbox{ where } (u_k)_{k\in \N} \subset \C^2 \\ & \mbox{ is such that }  u\in W^{1,p}(]0,+\infty[\times \T,\C^2), \\ & \mbox{ and }
\re W(r,t) \sum_{k\geq 0} e^{2\pi k i t} u_k = 0 \; \forall t\in \T \bigr\}.
\end{split}
\end{equation}
In the following two lemmas we use the above identity to compute the kernel of $D^+_{T_r}$ 
for $0\leq r \leq 1/2$ and for $1/2\leq r \leq 1$.

\begin{lem}
\label{kernel1}
If $0\leq r \leq 1/2$ then $D_{T_r}^+$ is onto and its kernel is the two-dimensional space which is generated  by the pair of functions
\begin{eqnarray*}
u_r(s,t) & := & W_r(s,t) \Bigl( \hat{u}_0(r) e^{-\pi s} + \hat{u}_1 (r) e^{2\pi i t} e^{-3\pi s} \Bigr), \\
v_r(s,t) & := & W_r(s,t) \Bigl( \hat{v}_0(r) e^{-\pi s} + \hat{v}_1 (r) e^{2\pi i t} e^{-3\pi s} \Bigr),
\end{eqnarray*}
where
\[
\begin{array}{rlrl}
\hat{u}_0(r) :=& \varphi(r)^2 \binom{1}{-1} + (1-\varphi(r))^2 \binom{i}{i}, & 
\hat{v}_0(r) :=& \varphi(r)^2 \binom{1}{1} + (1-\varphi(r))^2 \binom{-i}{i} , \\
\hat{u}_1(r) :=& \varphi(r)(1-\varphi(r)) \binom{ -1-i }{ 1-i }, &
\hat{v}_1(r) :=& \varphi(r)(1-\varphi(r)) \binom{1-i }{ 1+i}.
\end{array}
\]
\end{lem}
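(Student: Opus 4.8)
The plan is to compute $\ker D_{T_r}^+$ explicitly by Fourier analysis in the variable $t$, and then to deduce surjectivity from the Fredholm index. First I would unwind the description (\ref{nucleo}): writing the boundary datum of a prospective kernel element as a Fourier series $\psi(t) := \sum_{k\ge 0} e^{2\pi kit} u_k$ with $u_k\in\C^2$, the kernel consists of the maps $W_r u$ with $u(s,t) = \sum_{k\ge0} e^{2\pi kit} e^{-\pi(2k+1)s} u_k$ lying in $W^{1,p}(]0,+\infty[\times\T,\C^2)$ and such that $\re\bigl(W(r,t)\psi(t)\bigr) = 0$ for every $t\in\T$. Since for $0\le r\le 1/2$ the entries of $W(r,\cdot)$ are trigonometric polynomials of degree one, I would expand $W(r,t) = W_{-1}e^{-2\pi it} + W_0 + W_{+1}e^{2\pi it}$ and read off from the explicit formula that, with $a := 1-\varphi(r)$, $b := \varphi(r)$ and $\kappa := (a^2+b^2)^{-1/2}$,
\[
W_{\pm 1} = \frac{a\kappa}{2}\begin{pmatrix} 1 & \pm i \\ \mp i & 1 \end{pmatrix}, \qquad W_0 = b\kappa\, i\begin{pmatrix} -1 & 0 \\ 0 & 1 \end{pmatrix} .
\]
The two facts that make the computation work are: each $W_{\pm1}$ has rank one, with $\ran W_{-1} = \ker W_{+1} = \C\binom{1}{i}$ and $\ran W_{+1} = \ker W_{-1} = \C\binom{1}{-i}$; and $W_0$ interchanges the lines $\C\binom{1}{i}$ and $\C\binom{1}{-i}$. (If $\varphi(r)=1$ then $a=0$ and $W(r,\cdot)\equiv\mathrm{diag}(-i,i)$ is constant, in which case the lemma is immediate from (\ref{nucleo}); so from now on I assume $\varphi(r)<1$, i.e. $a>0$.)

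Next I would translate the boundary condition into Fourier modes. Setting $c_m := W_{-1}u_{m+1} + W_0 u_m + W_{+1}u_{m-1}$ with $u_{-1}:=0$, the condition $\re(W(r,t)\psi(t))=0$ amounts to $c_m + \overline{c_{-m}} = 0$ for all $m\in\Z$. Since $W(r,\cdot)\psi$ has no Fourier mode below $-1$, the equations with $m\ge2$ reduce to the three-term recursion $W_{-1}u_{m+1} + W_0 u_m + W_{+1}u_{m-1}=0$, while $m=0$ and $m=1$ give two coupling equations. Decomposing $u_k = \alpha_k\binom{1}{i} + \beta_k\binom{1}{-i}$ and using the two structural facts, the recursion splits into the scalar relations $a\,\alpha_{m+1} = b\,i\,\beta_m$ and $a\,\beta_{m-1} = b\,i\,\alpha_m$ for $m\ge2$; combining these with the $\binom{1}{i}$-component of the $m=1$ equation, and using $a^2+b^2>0$, forces $u_k=0$ for all $k\ge 2$ together with $u_1\in\C\binom{1}{i}$. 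Hence every element of $\ker D_{T_r}^+$ has the form $W_r u$ with $u(s,t) = e^{-\pi s}\, u_0 + e^{2\pi it}\, e^{-3\pi s}\, u_1$ and $u_1\in\C\binom{1}{i}$; such $u$ are smooth with exponential decay, so membership in $W^{1,p}$ is automatic.

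It then remains to solve what is left. With $u_k=0$ for $k\ge2$, the $m=0$ equation together with the $\binom{1}{-i}$-component of the $m=1$ equation is a real-linear system of four equations in the six real coordinates of the pair $(u_0,u_1)$; I would write it out and observe that it decouples into two independent systems of two equations in three unknowns, each with a one-dimensional solution space, so the whole solution space is two-dimensional. A direct substitution then checks that the pairs $(\hat u_0(r),\hat u_1(r))$ and $(\hat v_0(r),\hat v_1(r))$ of the statement solve this system, and they are linearly independent (already their zeroth components $\hat u_0(r)$, $\hat v_0(r)$ are independent over $\R$). Therefore $\dim\ker D_{T_r}^+ = 2$ and $\ker D_{T_r}^+$ is spanned by $u_r$ and $v_r$. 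Surjectivity is then free: every operator in $\Sigma^+(-\pi I)$ has Fredholm index $2$ — this is the index of $D^+_{-\pi I}$ recalled just above the lemma — so $\dim\ker D_{T_r}^+ = 2$ forces $\coker D_{T_r}^+ = 0$.

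I expect the main obstacle to be the bookkeeping in the Fourier-mode reduction: spotting the rank-one and eigenline structure of $W_{\pm1}$ and $W_0$ that collapses the three-term recursion, carrying the $m=0$ and $m=1$ equations through in terms of the coordinates $(\alpha_k,\beta_k)$, and matching the resulting two-parameter family of solutions to the explicit vectors $\hat u_i(r)$, $\hat v_i(r)$. The remaining ingredients — the index computation and the final linear algebra — are routine.
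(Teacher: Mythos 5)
Your proposal is correct, and the overall route is the one the paper also follows: reduce to the boundary condition via the kernel description (\ref{nucleo}), expand in Fourier modes in $t$, solve the resulting infinite algebraic system, and deduce surjectivity from the fact that every operator in $\Sigma^+(-\pi I)$ has Fredholm index $2$. The genuine difference is in the bookkeeping, and yours is the more conceptual one. The paper expands in $\cos(2\pi kt)$, $\sin(2\pi kt)$ with real coordinates $\re x_k$, $\im x_k$, $\re y_k$, $\im y_k$, writes out the infinite system (\ref{s}), and then performs the change of variables $\xi_k = \re x_k - \im y_k$, $\lambda_k = \re x_k + \im y_k$, $\eta_k = \re y_k + \im x_k$, $\mu_k = \re y_k - \im x_k$ that makes the three-term recursion split into two-term scalar relations --- but this substitution is introduced without motivation. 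Your observation that $W(r,\cdot) = W_{-1}e^{-2\pi it} + W_0 + W_{+1}e^{2\pi it}$ with $W_{\pm 1}$ of rank one, $\ran W_{-1} = \ker W_{+1} = \C\binom{1}{i}$, $\ran W_{+1} = \ker W_{-1} = \C\binom{1}{-i}$, and $W_0$ swapping the two lines, is precisely what explains that substitution: writing $u_k = \alpha_k \binom{1}{i} + \beta_k \binom{1}{-i}$, the paper's variables are (up to factors of $2$ and signs) $\re\alpha_k$, $\im\alpha_k$, $\re\beta_k$, $\im\beta_k$. So your approach carries out the same computation but makes the decoupling structural rather than a lucky guess. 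Your treatment of the edge cases ($\varphi(r)=1$ handled directly from (\ref{nucleo}); $a^2+b^2>0$ used to annihilate the high modes and $u_1$'s $\binom{1}{-i}$-component) and the final index argument for surjectivity are both fine.
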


\begin{proof}
Set $u_k=(x_k,y_k)$ with $x_k,y_k\in \C$. Since $0\leq r \leq 1/2$, the vector
\begin{equation}
\label{vect}
W(r,t) \sum_{k\geq 0} e^{2\pi i k t} u_k 
\end{equation}
equals
\[
\omega(r) \sum_{k\geq 0} e^{2\pi i k t} \left( \begin{array}{c} \bigl( (1-\varphi) \cos (2\pi t) - \varphi i \bigr) x_k - (1-\varphi) \sin (2\pi t) y_k \\ (1-\varphi) \sin(2\pi t) x_k + \bigl( (1-\varphi) \cos (2\pi t) + \varphi i \bigr) y_k \end{array} \right).
\]
Here and in the following equations $\varphi$ is evaluated at $r$. Therefore, the real part of (\ref{vect}) vanishes if and only if the following two equations hold:
\begin{eqnarray*}
\begin{aligned}
 \sum_{k\geq 0} & \Bigl( \cos (2\pi kt) \bigl( (1-\varphi) \cos (2\pi t) \re x_k + \varphi \im x_k - (1-\varphi) \sin (2\pi t) \re y_k \bigr)  \\
&  - \sin (2\pi k t) \bigl( (1-\varphi) \cos (2\pi t) \im x_k - \varphi \re x_k - (1-\varphi) \sin (2\pi t) \im y_k \bigr) \Bigr) = 0,  \\
\end{aligned}
\\
\begin{aligned}
\sum_{k\geq 0} & \Bigl( \cos (2\pi kt) \bigl( (1-\varphi) \sin (2\pi t) \re x_k + (1-\varphi) \cos (2\pi t) \re y_k - \varphi \im y_k \bigr)\\
& - \sin (2\pi k t) \bigl( (1-\varphi) \sin (2\pi t) \im x_k + (1-\varphi) \cos (2\pi t) \im y_k + \varphi \re y_k \bigr) \Bigr) = 0.
\end{aligned}
\end{eqnarray*}
By using the identities
\begin{eqnarray*}
\cos (2\pi k t) \cos (2\pi t) &=& \frac{1}{2} \bigl( \cos (2\pi(k+1)t) + \cos (2\pi (k-1)t) \bigr), \\ \cos (2\pi k t) \sin (2\pi t) &=& \frac{1}{2} \bigl( \sin (2\pi(k+1)t) - \sin (2\pi (k-1)t) \bigr), \\
\sin (2\pi k t) \cos (2\pi t) &=& \frac{1}{2} \bigl( \sin (2\pi(k+1)t) + \sin (2\pi (k-1)t) \bigr),
\\ \sin (2\pi k t) \sin (2\pi t) &=& \frac{1}{2} \bigl( - \cos (2\pi(k+1)t) + \cos (2\pi (k-1)t )\bigr),  
\end{eqnarray*}
and the fact that the set
\[
\bigl\{ \cos (2\pi k t) \bigr\}_{k\geq 0} \cup \bigl\{\sin (2\pi k t) \bigr\}_{k\geq 1}
\]
is an orthogonal family in $L^2(\T)$, we find that the previous two equations are equivalent to the following infinite system:
\begin{equation}
\label{s}
\begin{split}
\textstyle{\frac{1-\varphi}{2}} \re x_1 + \varphi \im x_0 + \frac{1-\varphi}{2} \im y_1 &= 0, 
\\
\frac{1-\varphi}{2} \re y_1 - \varphi \im y_0 - \frac{1-\varphi}{2} \im x_1 &= 0,  \\
\textstyle{\frac{1-\varphi}{2}} (2 \re x_0 + \re x_2) + \varphi \im x_1 + \textstyle{\frac{1-\varphi}{2}} \im y_2 &=0, \\
\textstyle{\frac{1-\varphi}{2}} (2\re y_0 + \re y_2) - \varphi \im y_1 - \textstyle{\frac{1-\varphi}{2}} \im x_2 &= 0, \\
- \textstyle{\frac{1-\varphi}{2}} (2\re y_0 - \re y_2) - \textstyle{\frac{1-\varphi}{2}} \im x_2 + \varphi \re x_1 &= 0, \\
\textstyle{\frac{1-\varphi}{2}} (2\re x_0 - \re x_2) - \textstyle{\frac{1-\varphi}{2}} \im y_2 - \varphi \re y_1 &= 0, \\ 
\textstyle{\frac{1-\varphi}{2}} (\re x_{k-1} + \re x_{k+1}) + \varphi \im x_k - \frac{1-\varphi}{2} (\im y_{k-1} -  \im y_{k+1}) &= 0,  
\\
\textstyle{\frac{1-\varphi}{2}} (\re y_{k-1} + \re y_{k+1} ) - \varphi \im y_k - \frac{1-\varphi}{2} (-\im x_{k-1} + \im x_{k+1} ) &= 0, 
\\
 -\textstyle{\frac{1-\varphi}{2}} ( \re y_{k-1} -  \re y_{k+1})  - \frac{1-\varphi}{2} ( \im x_{k-1} +    \im x_{k+1}) + \varphi \re x_k &= 0,  
\\
\textstyle{\frac{1-\varphi}{2}} (\re x_{k-1} - \re x_{k+1} ) - \frac{1-\varphi}{2} (\im y_{k-1} + \im y_{k+1} ) - \varphi \re y_k &= 0, 
\end{split}
\end{equation}
for all $k\geq 2$. In order to solve this system, it is convenient to consider the change of variables
\[
\xi_k = \re x_k - \im y_k, \quad \lambda_k = \re x_k + \im y_k, \quad \eta_k = \re y_k + \im x_k, \quad \mu_k = \re y_k - \im x_k,
\]
whose inverse is
\[
\re x_k = \textstyle{\frac{1}{2}} (\xi_k+\lambda_k), \;\; \im x_k = \frac{1}{2} (\eta_k - \mu_k), \;\; \re y_k = \frac{1}{2}(\eta_k + \mu_k), \;\; \im y_k = \frac{1}{2} (\lambda_k - \xi_k).
\]
The system (\ref{s}) can now be rewritten as
\begin{eqnarray}
\label{t0a}
(1-\varphi) \lambda_1 + \varphi (\eta_0 - \mu_0) &=& 0, \\
\label{t0b}
(1-\varphi) \mu_1 + \varphi (\xi_0 - \lambda_0) &=& 0, \\
\label{t1a}
(1-\varphi) (\xi_0+\lambda_0) + (1-\varphi) \lambda_2 +  \varphi (\eta_1-\mu_1) &=& 0, \\
\label{t1b}
(1-\varphi) (\eta_0+\mu_0) + (1-\varphi) \mu_2 - \varphi (\lambda_1 - \xi_1) &=& 0, \\
\label{t1c}
-(1-\varphi) (\eta_0+\mu_0) + (1-\varphi) \mu_2 + \varphi (\xi_1+\lambda_1) &=& 0, \\
\label{t1d}
(1-\varphi) (\lambda_0+\xi_0) - (1-\varphi) \lambda_2 - \varphi (\eta_1+\mu_1) &=& 0, \\ 
\label{t1}
 - (1-\varphi) \xi_{k-1} + (1-\varphi) \lambda_{k+1} + \varphi (\eta_k-\mu_k) &=& 0,  \\
 \label{t2}
(1-\varphi) \eta_{k-1} + (1-\varphi) \mu_{k+1} - \varphi (\lambda_k-\xi_k)  &=& 0, \\
\label{t3}
 - (1-\varphi) \eta_{k-1} + (1-\varphi) \mu_{k+1} + \varphi (\xi_k + \lambda_k) &=& 0, \\
\label{t4}
 (1-\varphi) \xi_{k-1} - (1-\varphi) \lambda_{k+1} - \varphi (\eta_k+\mu_k) &=& 0 , 
\end{eqnarray}
for every $k\geq 2$. By adding and subtracting (\ref{t1}) and (\ref{t4}), and (\ref{t2}) and (\ref{t3}), we can rewrite the equations (\ref{t1}) to (\ref{t4}) as
\begin{eqnarray}
\label{u1}
 (1-\varphi) \xi_{k-1} - \varphi \mu_k &=& 0,  \\
\label{u3}
(1-\varphi) \lambda_{k+1} + \varphi \eta_k &=& 0, \\
\label{u2}
(1-\varphi) \mu_{k+1} + \varphi \xi_k &=& 0, \\
\label{u4}
 (1-\varphi) \eta_{k-1} - \varphi \lambda_k &=& 0 , 
\end{eqnarray} 
for every $k\geq 2$.

Let $k\geq 3$. From (\ref{u1}) and from (\ref{u2}) for $k-1$, we deduce that
\[
\varphi^2 \mu_k = \varphi (1-\varphi) \xi_{k-1} = - (1-\varphi)^2 \mu_k,
\]
which implies that
\[
\mu_k = 0, \qquad \forall k\geq 3.
\]
Together with (\ref{u1}) and (\ref{u2}), this implies that
\[
\xi_k = 0, \qquad \forall k\geq 2.
\]
Similarly, (\ref{u3}) for $k-1$ and (\ref{u4}) imply that
\[
\varphi^2 \lambda_k = \varphi (1-\varphi) \eta_{k-1} = - (1-\varphi)^2 \lambda_k,
\]
from which
\[
\lambda_k = 0, \qquad \forall k\geq 3,
\]
and, using again (\ref{u3}) and (\ref{u4}), 
\[
\eta_k = 0, \qquad \forall k\geq 2.
\]
If we add and subtract (\ref{t1a}) and (\ref{t1d}), and we do the same with (\ref{t1b}) and (\ref{t1c}), we can rewrite the equations (\ref{t1a}) to (\ref{t1d}) as
\begin{eqnarray}
\label{y1}
(1-\varphi) (\lambda_0 + \xi_0) - \varphi \mu_1 &=& 0, \\
\label{y2}
(1-\varphi) \lambda_2 + \varphi \eta_1 &=& 0, \\
\label{y3}
(1-\varphi) \mu_2 + \varphi \xi_1 &=& 0,\\
\label{y4}
(1-\varphi) (\eta_0 + \mu_0) - \varphi \lambda_1 &=& 0.
\end{eqnarray}
By (\ref{u1}) for $k=2$ and (\ref{y3}), we find
\[
\varphi^2 \mu_2 = \varphi(1-\varphi) \xi_1 = - (1-\varphi)^2 \mu_2,
\]
which implies that 
\[
\mu_2 = 0,
\]
and, using again (\ref{u1}) for $k=2$ and (\ref{y3}),
\[
\xi_1 = 0.
\]
Similarly, using (\ref{u4}) for $k=2$ and (\ref{y2}), we find
\[
\lambda_2 =0, \qquad \eta_1 = 0.
\]
This shows that the infinite system (\ref{t0a}) to (\ref{t4}) reduces to the system of four equations (\ref{t0a}), (\ref{t0b}), (\ref{y1}), (\ref{y4}) in the unknowns $\xi_0$, $\lambda_0$, $\eta_0$, $\mu_0$, $\lambda_1$, $\mu_1$, all the other variables being zero. The space of solutions of such a system is two-dimensional and consists of the 6-tuples $(\xi_0, \lambda_0, \eta_0, \mu_0, \lambda_1, \mu_1)$ whose first four components are related by the identities
\begin{eqnarray*} 
(1-\varphi)^2 (\eta_0 + \mu_0) + \varphi^2 (\eta_0 - \mu_0) &=& 0, \\
(1-\varphi)^2 (\lambda_0 + \xi_0) + \varphi^2 (\xi_0 - \lambda_0) &=& 0,
\end{eqnarray*}
and the last two are determined by the formulas
\begin{eqnarray*}
\lambda_1 &=& (1-2\varphi) \eta_0 + \mu_0, \\
\mu_1 &=& (1-2\varphi) \xi_0 + \lambda_0.
\end{eqnarray*}
Going back to the original variables, we conclude that the space of solutions of the infinite system (\ref{s}) is two-dimensional and consists of the infinite complex vectors $(x_k)_{k\in \N}$, $(y_k)_{k\in \N}$ such that $x_0$ and $y_0$ are related by the identities
\begin{eqnarray*}
(1-\varphi)^2 \re y_0 + \varphi^2 \im x_0 &=& 0,\\
(1-\varphi)^2 \re x_0 - \varphi^2 \im y_0 &=& 0,
\end{eqnarray*}
$x_1$ and $y_1$ are given by
\begin{eqnarray*}
x_1 &=& (1-\varphi) \re y_0 - \varphi \im x_0 - i \bigl( (1-\varphi) \re x_0 + \varphi \im y_0 \bigr), \\
y_1 &=& (1-\varphi) \re x_0 + \varphi \im y_0  + i \bigl( (1-\varphi) \re y_0 - \varphi \im x_0 \bigr),
\end{eqnarray*}
and $x_k=y_k=0$ for every $k\geq 2$. Therefore, a basis of this two-dimensional space of solutions is given by the vector
\[
\begin{array}{rclcrcll}
x_0 &=& \varphi^2 + (1-\varphi)^2 i, &\quad& y_0 &=& - \varphi^2 + (1-\varphi)^2 i,& \\ x_1 &=& \varphi(1-\varphi) (-1-i), &\quad& y_1 &=& \varphi(1-\varphi) (1-i) , &\\ x_k &=& 0, &\quad& y_k &=& 0, & \forall k\geq 2, \end{array}
\]
and by the vector
\[
\begin{array}{rclcrcll}
x_0 &=& \varphi^2 - (1-\varphi)^2 i, &\quad& y_0 &=&  \varphi^2 + (1-\varphi)^2 i,& \\ x_1 &=& \varphi(1-\varphi) (1-i), &\quad& y_1 &=& \varphi(1-\varphi) (1+i) , &\\ x_k &=& 0, &\quad& y_k &=& 0, & \forall k\geq 2, \end{array}
\]
We have proved that the real part of (\ref{vect}) vanishes if and only if the infinite vector $(u_k)_{k\in \N} = ((x_k,y_k))_{k\in \N}$ belongs to the plane generated by the above two vectors. Together with the identity (\ref{nucleo}), this implies that for every $r\in [0,1/2]$ the kernel of $D_{T_r}^+$ is two-dimensional and is generated 
by the pair of functions
\begin{eqnarray*}
u_r(s,t) & := & W_r(s,t) \Bigl( \hat{u}_0(r) e^{-\pi s} + \hat{u}_1 (r) e^{2\pi i t} e^{-3\pi s} \Bigr), \\
v_r(s,t) & := & W_r(s,t) \Bigl( \hat{v}_0(r) e^{-\pi s} + \hat{v}_1 (r) e^{2\pi i t} e^{-3\pi s} \Bigr),
\end{eqnarray*}
where
\[
\begin{array}{rlrl}
\hat{u}_0(r) :=& \varphi(r)^2 \binom{1}{-1} + (1-\varphi(r))^2 \binom{i}{i}, & 
\hat{v}_0(r) :=& \varphi(r)^2 \binom{1}{1} + (1-\varphi(r))^2 \binom{-i}{i} , \\
\hat{u}_1(r) :=& \varphi(r)(1-\varphi(r)) \binom{ -1-i }{ 1-i }, &
\hat{v}_1(r) :=& \varphi(r)(1-\varphi(r)) \binom{1-i }{ 1+i}.
\end{array}
\]
Since $D_{T_r}^+$ belongs to $\Sigma^+(-\pi I)$, it has Fredholm index two and, having a two-dimensional kernel, it is onto. This concludes the proof of Lemma \ref{kernel1}. 
\end{proof}

\begin{lem}
\label{kernel2}
If $1/2\leq r \leq 1$ then $D_{T_r}^+$ is onto and its kernel is the two-dimensional space which is generated  by the pair of functions
\[
u_r(s,t)  :=   W_r(s,t) \tilde{u}_0(r) e^{-\pi s}, \qquad
v_r(s,t)  :=  W_r(s,t) \tilde{v}_0(r) e^{-\pi s},
\]
where
\[
\tilde{u}_0(r) := \left( \begin{array}{c} e^{-i \frac{\pi}{2} \varphi(r-\frac{1}{2})} \\ - e^{i \frac{\pi}{2} \varphi(r-\frac{1}{2})} \end{array} \right), \qquad 
\tilde{v}_0(r) := \left( \begin{array}{c} e^{-i \frac{\pi}{2} \varphi(r-\frac{1}{2})} \\ e^{i \frac{\pi}{2} \varphi(r-\frac{1}{2})} \end{array} \right).
\]
\end{lem}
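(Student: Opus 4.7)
The strategy exactly parallels the proof of Lemma \ref{kernel1}, but the analysis simplifies dramatically because on the range $r\in[1/2,1]$ the matrix $W(r,t)$ is both diagonal and independent of $t$. To emphasize this, I would set $\alpha = \alpha(r) := \tfrac{\pi}{2}\varphi(r-\tfrac{1}{2})$, so that
\[
W(r,t) = \left(\begin{array}{cc} -ie^{i\alpha} & 0 \\ 0 & ie^{-i\alpha} \end{array}\right),
\]
and apply the explicit description (\ref{nucleo}) of $\ker D^+_{T_r}$. A function $W_r u$ lies in this kernel precisely when $u(s,t) = \sum_{k\geq 0} e^{2\pi ikt}e^{-\pi(2k+1)s}u_k$ with $u_k = (x_k,y_k)\in\C^2$ and $\re W(r,t)\sum_{k\geq 0}e^{2\pi ikt}u_k = 0$ for every $t\in\T$.

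Since $W(r,t)$ is diagonal and $t$-independent, this boundary condition decouples into two independent scalar conditions, one on $(x_k)_{k\geq 0}$ and one on $(y_k)_{k\geq 0}$. The first reads $\re\bigl(-ie^{i\alpha}\sum_{k\geq 0}x_k e^{2\pi ikt}\bigr)=0$. Expanding and using the orthogonality of $\{\cos 2\pi kt\}_{k\geq 0}\cup\{\sin 2\pi kt\}_{k\geq 1}$ in $L^2(\T)$, this is equivalent to $\re(-ie^{i\alpha}x_0)=0$ together with $-ie^{i\alpha}x_k=0$ for every $k\geq 1$; that is, $x_k=0$ for $k\geq 1$ and $x_0\in e^{-i\alpha}\R$. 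The same reasoning for the second component yields $y_k=0$ for $k\geq 1$ and $y_0\in e^{i\alpha}\R$.

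Consequently the space of admissible sequences $(u_k)$ is two-dimensional, with natural basis $(e^{-i\alpha},0)$ and $(0,e^{i\alpha})$ in degree $k=0$. Replacing this basis by its sum and difference gives exactly the vectors
\[
\tilde{v}_0(r) = \left(\begin{array}{c} e^{-i\alpha} \\ e^{i\alpha}\end{array}\right),\qquad
\tilde{u}_0(r) = \left(\begin{array}{c} e^{-i\alpha} \\ -e^{i\alpha}\end{array}\right),
\]
appearing in the statement, and multiplication by $W_r(s,t)e^{-\pi s}$ produces the claimed generators $u_r$ and $v_r$ of $\ker D^+_{T_r}$.

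Finally, $D^+_{T_r}\in\Sigma^+(-\pi I)$ has Fredholm index equal to $-\mu_{CZ}$ of the constant path generated by $-\pi I$, which equals $2$; since the kernel we just computed is two-dimensional, surjectivity follows automatically. There is no real obstacle here: the only point requiring a little care is to keep track of the phase factors $e^{\pm i\alpha}$ in the decoupled Fourier analysis, and the whole argument is an order of magnitude shorter than the one for Lemma \ref{kernel1} precisely because the $t$-dependence of $W$ has disappeared on $[1/2,1]$.
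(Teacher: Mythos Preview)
Your proof is correct and follows essentially the same approach as the paper's: both use the identity (\ref{nucleo}), exploit that $W(r,t)$ is diagonal and $t$-independent for $r\in[1/2,1]$ to decouple the boundary condition into two scalar Fourier problems, solve these to find $x_k=y_k=0$ for $k\geq 1$ together with one linear constraint each on $x_0$ and $y_0$, and then appeal to the Fredholm index to conclude surjectivity. The only difference is cosmetic: you phrase the constraints as $x_0\in e^{-i\alpha}\R$, $y_0\in e^{i\alpha}\R$, whereas the paper writes them out as $\sin\alpha\,\re x_0+\cos\alpha\,\im x_0=0$ and $\sin\alpha\,\re y_0-\cos\alpha\,\im y_0=0$, which are the same conditions.
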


\begin{proof}
Set $u_k=(x_k,y_k)$ with $x_k,y_k\in \C$. Since $0\leq r \leq 1/2$, we have
\begin{equation}
\label{vect2}
W(r,t) \sum_{k\geq 0} e^{2\pi i k t} u_k = \sum_{k\geq 0} \left( \begin{array}{c} - i e^{i \frac{\pi}{2} \varphi(r-\frac{1}{2})} e^{2\pi i k t} x_k \\ i e^{-i \frac{\pi}{2} \varphi(r-\frac{1}{2})} e^{2\pi i k t} y_k \end{array} \right).
\end{equation}
The real part of the first component of (\ref{vect2}) equals
\begin{eqnarray*}
\sum_{k\geq 0} \Bigl( \bigl( \sin ( \textstyle{\frac{\pi}{2}} \varphi(r-\frac{1}{2})) \re x_k + \cos ( \frac{\pi}{2} \varphi(r-\frac{1}{2})) \im x_k\bigr) \cos (2\pi k t) \\ + \bigl( \cos (\textstyle{\frac{\pi}{2}}\varphi(r-\frac{1}{2})) \re x_k - \sin (\frac{\pi}{2} \varphi(r-\frac{1}{2})) \im x_k \bigr) \sin (2\pi k t) \Bigr).
\end{eqnarray*}
This expression vanishes for every $t\in \T$ if and only if 
\[
x_k = 0 , \qquad \forall k\geq 1,
\]
and $x_0$ satisfies
\begin{equation}
\label{z1}
\sin \bigl( \textstyle{\frac{\pi}{2}} \varphi(r-\frac{1}{2}) \bigr) \re x_0 + \cos \bigl( \frac{\pi}{2} \varphi(r-\frac{1}{2}) \bigr) \im x_0 = 0.
\end{equation}
The real part of the second component of (\ref{vect2}) equals
\begin{eqnarray*}
\sum_{k\geq 0} \Bigl( \bigl( \sin ( \textstyle{\frac{\pi}{2}} \varphi(r-\frac{1}{2})) \re y_k - \cos ( \frac{\pi}{2} \varphi(r-\frac{1}{2})) \im y_k\bigr) \cos (2\pi k t) \\ - \bigl( \cos ( \textstyle{\frac{\pi}{2}} \varphi(r-\frac{1}{2})) \re y_k + \sin (\frac{\pi}{2} \varphi(r-\frac{1}{2})) \im y_k \bigr) \sin (2\pi k t) \Bigr).
\end{eqnarray*}
This expression vanishes for every $t\in \T$ if and only if 
\[
y_k = 0 , \qquad \forall k\geq 1,
\]
and $y_0$ satisfies
\begin{equation}
\label{z2}
\sin \bigl( \textstyle{\frac{\pi}{2}} \varphi(r-\frac{1}{2}) \bigr) \re y_0 - \cos \bigl( \frac{\pi}{2} \varphi(r-\frac{1}{2}) \bigr) \im y_0 = 0.
\end{equation}
Therefore, the real part of (\ref{vect2}) vanishes if and only if $u_k=0$ for every $k\geq 1$ and $u_0\in \C^2$ belongs to the two-dimensional real subspace which is spanned by the vectors $\tilde{u}_0(r)$ and $\tilde{v}_0(r)$. The conclusion follows from the identity (\ref{nucleo}) and from the fact that $D_{T_r}^+$ has Fredholm index two.
\end{proof}

The above two lemmas allow us to understand the effect of the conjugacy by the unitary map $W$ on the determinant bundle of the space $\Sigma^+(-\pi I)$:

\begin{lem}
\label{particolare}
Let $n=2$ and let $W\in C^{\infty}([0,+\infty]\times \T,\mathrm{U}(n))$ be the map which is defined above. Then the canonical lift to the determinant bundle of the map
\[
\Sigma^+(-\pi I) \rightarrow \Sigma^+(-\pi I), \qquad D_A^+ \mapsto W D_A^+ W^{-1},  
\]
is orientation-reversing.
\end{lem}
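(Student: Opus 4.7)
The plan is to connect $D^+_{-\pi I}$ to $WD^+_{-\pi I}W^{-1}$ through the one-parameter path $r\mapsto D^+_{T_r}$ constructed above, which lies entirely in $\Sigma^+(-\pi I)$ (as $W_r(+\infty,t)=I$ for all $r$), and reduce the sign question to a direct comparison of two explicit bases at the endpoints. Because $\Sigma^+(-\pi I)$ is contractible, its determinant bundle is trivially orientable, and checking that the canonical lift reverses orientation amounts to exhibiting one operator at which this occurs; the natural choice is $D^+_{-\pi I}$ itself, connected to its conjugate via this path.

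The structural input is that Lemmas \ref{kernel1} and \ref{kernel2} produce, for every $r\in[0,1]$, a continuously varying basis $\{u_r,v_r\}$ of a uniformly two-dimensional kernel, and all the operators in the family are surjective. One must verify that the two formulas agree at the junction $r=1/2$, but this is immediate since $\varphi(1/2)=1$ and $\varphi(0)=0$ force both expressions to specialize to $(1,-1)e^{-\pi s}$ and $(1,1)e^{-\pi s}$. Surjectivity yields the canonical identification $\det D^+_{T_r}\cong \Lambda^2\ker D^+_{T_r}$, so $r\mapsto u_r\wedge v_r$ is a nowhere-vanishing continuous section of the determinant bundle along the path, and its sign relative to the reference orientation at each endpoint completely determines the answer.

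For the endpoint computation, note that at $r=1$ one has $W_1=I$, so the reference orientation of $\ker D^+_{-\pi I}=\{iz\,e^{-\pi s}:z\in\R^2\}$ is naturally represented by the basis $\{(i,0)e^{-\pi s},(0,i)e^{-\pi s}\}$, while Lemma \ref{kernel2} yields $u_1=(-i,-i)e^{-\pi s}$, $v_1=(-i,i)e^{-\pi s}$. At $r=0$ the canonical-lift reference on $\ker(WD^+_{-\pi I}W^{-1})$ is $\{W(i,0)e^{-\pi s},W(0,i)e^{-\pi s}\}$ (since conjugation by $W$ acts on kernels by left multiplication by $W$), while Lemma \ref{kernel1} yields $u_0=W\cdot(i,i)e^{-\pi s}$, $v_0=W\cdot(-i,i)e^{-\pi s}$. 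The two change-of-basis determinants are
\[
\det\begin{pmatrix}-1 & -1\\ -1 & 1\end{pmatrix}=-2\quad\text{at }r=1,\qquad \det\begin{pmatrix}1 & -1\\ 1 & 1\end{pmatrix}=+2\quad\text{at }r=0.
\]
Because the signs disagree, the continuous section $u_r\wedge v_r$ is aligned with the reference orientation at one endpoint and anti-aligned at the other, which is precisely the statement that the canonical lift of $D_A^+\mapsto WD_A^+W^{-1}$ reverses orientation on $\det \Sigma^+(-\pi I)$.

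No serious new obstacle is anticipated in this proof: the substantive analytical work—identifying the kernels of the degenerate intermediate operators $D^+_{T_r}$, whose boundary conditions are not preserved by $W_r$—has already been absorbed into Lemmas \ref{kernel1} and \ref{kernel2}. The remaining hurdles are bookkeeping: the continuity check at $r=1/2$, the verification that the canonical-lift reference orientation at $r=0$ is indeed $W\cdot(\text{standard basis})$, and the two small sign computations above. The conceptual point worth emphasizing is that the discrepancy arises because the conjugation $W\cdot W^{-1}$ does not preserve the Lagrangian boundary condition along the path, even though it does at $r=0$ and $r=1$, and it is exactly this failure which the explicit family $D^+_{T_r}$ allows us to measure.
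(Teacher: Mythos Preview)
Your proposal is correct and follows essentially the same approach as the paper: both use the continuous path $r\mapsto D^+_{T_r}$ and the continuously varying basis $\{u_r,v_r\}$ from Lemmas \ref{kernel1} and \ref{kernel2}, then compare orientations at the endpoints. The only cosmetic difference is that the paper observes the relations $Wu_1=-u_0$, $Wv_1=v_0$ directly (one sign flip in $(Wu_1)\wedge(Wv_1)=-u_0\wedge v_0$), whereas you introduce the auxiliary reference basis $\{(i,0)e^{-\pi s},(0,i)e^{-\pi s}\}$ and detect the same sign flip as the discrepancy between the two $2\times 2$ change-of-basis determinants.
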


\begin{proof}
Since
\[
\hat{u}_0\bigl(\textstyle\frac{1}{2}\bigr) = \displaystyle\binom{1}{-1}  = \tilde{u}_0\bigl(\textstyle{\frac{1}{2}}\bigr),\qquad \hat{v}_0\bigl(\textstyle{\frac{1}{2}}\bigr) = \displaystyle\binom{1}{1} = \tilde{v}_0\bigl(\textstyle{\frac{1}{2}}\bigr),
\]
and
\[
\hat{u}_1\bigl(\textstyle{\frac{1}{2}}\bigr) = \hat{v}_1\bigl(\textstyle{\frac{1}{2}}\bigr) = 0,
\]
the basis $u_r$, $v_r$ of $\ker D_{T_r}^+$ which is defined in Lemmas \ref{kernel1} and \ref{kernel2} depends continuously on $r\in [0,1]$. Notice also that 
\begin{equation}
\label{rel}
Wu_1 = -u_0, \qquad W v_1 = v_0.
\end{equation} 
Let us fix one of the two orientations of $\det(\Sigma^+(-\pi I))$, for instance the one for which $u_1 \wedge v_1$ is a positive generator of the line
\[
\det \bigl(D_{T_1}^+\bigr) = \det \bigl(D_{-\pi I}^+\bigr) = \Lambda^2 \bigl( \ker  D_{-\pi I}^+ \bigr).
\]
Since $T_r$ is onto for every $r\in [0,1]$ and its kernel is generated by the pair of vectors $u_r$, $v_r$, which depend continuously on $r$, $u_0 \wedge v_0$ is a positive generator of the line
\[
\det \bigl(D_{T_0}^+\bigr) = \det \bigl(W D_{-\pi I}^+ W^{-1}\bigr) = \Lambda^2 \bigl( \ker W D_{-\pi I}^+ W^{-1} \bigr).
\]
On the other hand, the generator of the above line
which is canonically induced by $u_1 \wedge v_1$ by means of conjugacy of $D_{-\pi I}^+$ by $W$ is 
\[
(Wu_1) \wedge (Wv_1) = (-u_0) \wedge v_0 = - u_0 \wedge v_0,
\]
where we have used (\ref{rel}). Since this generator is negative, the canonical map
\[
\det \bigl(D_{-\pi I}^+\bigr) \rightarrow \det \bigl(W D_{-\pi I}^+ W^{-1}\bigr)
\]
is orientation reversing. Then same fact is true for every $D_A^+\in \Sigma^+(-\pi I)$ because $\Sigma^+(-\pi I)$ is connected.
\end{proof}

We are finally ready to prove Proposition \ref{complex}:

\begin{proof}[Proof of Proposition \ref{complex}]
When $n=1$, $\mathrm{SO}(1)=\{I\}$, so the assumption of Lemma \ref{contr} is trivially satisfied and case (i) of Proposition \ref{complex} holds. 

Let $n=2$ and let $W$ be as above. By Lemma \ref{contr} and Lemma \ref{particolare}, together with the functoriality of the canonical mapping between determinant bundles which is induced by composition by isomorphisms, we deduce that the canonical lift to the determinant bundle of the map
\[
\Sigma^+(-\pi I_{\C^2}) \rightarrow \Sigma^+(-\pi I_{\C^2}), \qquad D_{A}^+ \mapsto W^k D_A^+ W^{-k}, \qquad k\in \Z,
\]
is orientation-preserving if and only if $k$ is even. 

Since for $n=2$ an arbitrary $U$ as in the hypothesis is of the form $U=V\cdot W^k$, where $V$ satisfies the hypotheses of Lemma \ref{contr} and $k\in \Z$, we deduce that case (ii) of Proposition \ref{complex} holds in the particular case $S^+=-\pi I_{\C^2}$.

Now let $n>2$. If $U=W \oplus I_{\C^{n-2}}$ and $S^+=-\pi I_{\C^n}$,  the canonical lift to the determinant bundle of the map
\[
\Sigma^+(-\pi I_{\C^n}) \rightarrow \Sigma^+(-\pi I_{\C^n}), \qquad D_{A}^+ \mapsto U D_A^+ U^{-1}, 
\]
is orientation reversing, as one deduces from Lemma \ref{particolare} and from the fact that within $\Sigma^+(-\pi I_{\C^n})$ one has maps which preserve the splitting $\C^2 \oplus \C^{n-2}$.
Since an arbitrary $U$ as in the hypothesis is either such that $U(0,\cdot)$ is contractible within $\mathrm{SO}(n)$, or  of the form $V \cdot (W\oplus I_{\C^{n-2}})$ where $V$ satisfies the hypotheses of Lemma \ref{contr}, we deduce that case (iii) of Proposition \ref{complex} holds in the particular case $S^+=-\pi I_{\C^n}$.

Therefore, Proposition \ref{complex} holds for every dimension when $S^+=-\pi I$. The case of a general $S^+$ follows by gluing.
\end{proof}

%\bibliographystyle{amsplain}
%\bibliography{../../biblio/nonlinear}

\providecommand{\bysame}{\leavevmode\hbox to3em{\hrulefill}\thinspace}
\providecommand{\MR}{\relax\ifhmode\unskip\space\fi MR }
% \MRhref is called by the amsart/book/proc definition of \MR.
\providecommand{\MRhref}[2]{%
  \href{http://www.ams.org/mathscinet-getitem?mr=#1}{#2}
}
\providecommand{\href}[2]{#2}

\end{document}